\documentclass[11pt]{article}
\usepackage[T1]{fontenc}
\usepackage{lmodern}
\usepackage[a4paper,margin=27mm]{geometry}
\usepackage{amsmath,amssymb,amsthm,mathtools,booktabs,array,microtype,needspace}
\usepackage{xurl}
\usepackage[hypertexnames=false,colorlinks=true,linkcolor=blue,citecolor=blue,urlcolor=blue]{hyperref}
\hypersetup{pdftitle={Higher-order colossally abundant numbers},pdfauthor={Oleg R. Musin},
pdfsubject={Higher-order colossally abundant numbers and Robin's criterion},
pdfkeywords={colossally abundant numbers, divisor sums, convex hulls, Robin's inequality, divisibility}}
\newtheorem{theorem}{Theorem}[section]
\newtheorem{proposition}[theorem]{Proposition}
\newtheorem{lemma}[theorem]{Lemma}
\newtheorem{corollary}[theorem]{Corollary}
\theoremstyle{definition}
\newtheorem{definition}[theorem]{Definition}
\newtheorem{problem}[theorem]{Problem}

\theoremstyle{remark}
\newtheorem{remark}[theorem]{Remark}
\DeclareMathOperator*{\argmax}{arg\,max}
\newcommand{\CA}{\mathrm{CA}}
\newcommand{\RH}{\mathrm{RH}}
\newcommand{\Rgap}{\mathcal R}
\newcommand{\Cl}{\mathcal C}
\title{Higher-order colossally abundant numbers}
\author{Oleg R. Musin\\[4pt]}
\date{}
\begin{document}
\maketitle

\begin{abstract}
Colossally abundant numbers have large sums of divisors relative to
their size. They also admit a geometric description using supporting
lines of a planar convex hull. Starting from this description, we
change coordinates to obtain
nested subsets of these numbers, which we call colossally abundant
numbers of higher order. Their definition does not depend on the
Riemann hypothesis. We give conditions under which Robin's inequality
holds on any one of these subsets if and only if the Riemann hypothesis
is true. Some of the resulting families have infinite sets at every
level but an empty intersection. For two classes of coordinates, we
determine how fast the least members grow. We also prove that every
fixed positive integer divides all members of sufficiently high order.
Under a concavity assumption, the least members form a divisibility
chain; for power abscissas, successive quotients have unbounded numbers
of prime factors, with an explicit limsup growth rate. In a second
class of families, every global maximum of the normalized divisor sum
is retained, and the least members tend to infinity if and only if
the Riemann hypothesis is true. We give numerical examples and explain
how the Ramanujan--Nicolas bounds produce terminating chains.
\end{abstract}

\medskip
\noindent\textbf{Keywords:} Colossally abundant numbers; sum-of-divisors
function; convex hulls; Robin's inequality; Riemann hypothesis; divisibility.

\smallskip
\noindent\textbf{Mathematics Subject Classification (2020):}
Primary 11N56; Secondary 11A25, 11M26, 11Y55.

\section{Introduction}\label{sec:intro}

A positive integer $N$ is \emph{colossally abundant} (CA) if, for
some $\varepsilon>0$,
\begin{equation}\label{eq:classical}
 \frac{\sigma(N)}{N^{1+\varepsilon}}
 \ge \frac{\sigma(n)}{n^{1+\varepsilon}}
 \qquad(n\ge1),
\end{equation}
where $\sigma(n)=\sum_{d\mid n}d$. We include all maximizing
integers when equality occurs. These numbers belong to the study of
large values of the divisor sum initiated by Ramanujan
\cite{Ramanujan} and developed by Alaoglu and Erd\H{o}s \cite{AE}.

There is also an algorithmic description. For a prime $p$ and $j\ge1$,
put
\begin{equation}\label{eq:critical}
 S_j(p)=p+p^2+\cdots+p^j,\qquad
 F(p,j)=\frac{\log(1+1/S_j(p))}{\log p}.
\end{equation}
For a fixed $\varepsilon$, the exponent of $p$ in a maximizer of
\eqref{eq:classical} includes every increment for which
$F(p,j)>\varepsilon$ and may include an increment for which
$F(p,j)=\varepsilon$. Decreasing $\varepsilon$ therefore generates
the CA numbers by successive prime-exponent changes. This is the
prime-by-prime description of Alaoglu and Erd\H{o}s
\cite[Theorem 10]{AE}. We retain all ties, so the description does
not depend on whether distinct prime increments can have the same
critical value. The first nontrivial CA numbers are
\[
 2,\ 6,\ 12,\ 60,\ 120,\ 360,\ 2520,\ 5040,\ 55440,\ 720720,\ldots.
\]
Every CA number is superabundant: its ratio $\sigma(n)/n$ exceeds
that of every smaller positive integer. Its prime factors form an
initial segment of the primes, their exponents are nonincreasing,
and its largest prime factor $P(n)$ satisfies $P(n)\sim\log n$.
For factorization and distribution results see \cite{AE,CNS,EN};
accessible accounts are given in \cite{Lagarias,NS}.

We use the convex-envelope description from our earlier paper
\cite[Section 3, Example 1]{Musin}. With
\[
 x(n)=\log n,\qquad y(n)=-\log\frac{\sigma(n)}n,
\]
condition \eqref{eq:classical} says that $y(n)+\varepsilon x(n)$
attains its minimum at $N$. Thus CA numbers are the contacts of
supporting lines of negative slope with the lower convex envelope
of the points $(x(n),y(n))$. This description suggests changing the
coordinates and taking new contacts. Increasing concave changes
of coordinates produce nested contact sets. We call their members
\emph{higher-order colossally abundant numbers} relative to the
chosen coordinate scheme. The scheme is part of the definition;
there is no single canonical hierarchy.

We study the least members of these sets and the restrictions on
their prime factorizations as the order increases. The proofs use
classical estimates for CA numbers from \cite{AE,CNS,EN}. Both the
construction and these arithmetic results are unconditional.

We also ask whether Robin's inequality can be tested on these smaller
sets. Robin's theorem
\cite{Robin} gives the following criterion for the Riemann hypothesis (RH):
\begin{equation}\label{eq:Robin}
 \RH\quad\Longleftrightarrow\quad
 \sigma(n)<e^\gamma n\log\log n\quad(n>5040),
\end{equation}
where $\gamma$ is Euler's constant. It is enough to check the
inequality on the CA numbers above $5040$. This reduction makes
it natural to seek nested subclasses on which the same test
remains sufficient. We write
\begin{equation}\label{eq:notation}
 E=e^\gamma,\quad \rho(n)=\frac{\sigma(n)}n,\quad
 u(n)=\log\log n,\quad G(n)=\frac{\rho(n)}{u(n)}.
\end{equation}
A nested family is \emph{Robin-preserving} if the inequality
$G(n)<E$ on any one fixed level is equivalent to its validity
on the entire CA tail above $5040$. Thus the defining property
concerns every member of a level, not merely its least member.

The least members form the \emph{first-contact sequence}
\[
 m_k=\min C^{(k)},\qquad \min\varnothing=+\infty.
\]
This sequence is nondecreasing. For any nested family of integer
sets, its intersection is empty if and only if $m_k\to\infty$
(Proposition~\ref{prop:minima}). We therefore study the growth of the first contacts and the
restrictions on their prime factorizations. Our three main results
are as follows.

\begin{enumerate}
\item \textbf{Preservation of Robin's criterion.}
For coordinates of the form $(F_k(u),-H_k(\rho))$,
Theorem~\ref{thm:general-family} gives sufficient conditions for
a nested family of infinite sets to be Robin-preserving.
The proof uses the concavity of the transformed Robin boundary
and a growth condition at infinity. If RH is false, these conditions
ensure that the contact set contains strict counterexamples
(Theorem~\ref{thm:transfer}). Applied at each level, this reduces
the test on level $k-1$ to the test on level $k$.
A chord condition gives empty intersection
(Proposition~\ref{prop:general-empty}).
Corollary~\ref{cor:general-F} gives families satisfying both
conditions, with a fixed abscissa and increasing powers of the
divisor sum as ordinates.

\item \textbf{Explicit growth laws for the first contacts.}
For the families $A_k(a,\beta)$ and $Q_k(a,\eta)$ defined in
\eqref{eq:Afamily} and \eqref{eq:Qfamily}, the ordinate is
$-\rho^k$ and the respective abscissas are $e^{a u^\beta}$ and
$e^{a(\log u)^\eta}$. Theorem~\ref{thm:rates} proves
unconditionally that
\[
 \log\log m_k^A\sim\left(\frac{k}{a\beta}\right)^{1/\beta},
 \qquad
 \log\log\log m_k^Q\sim
       \left(\frac{k}{a\eta}\right)^{1/(\eta-1)},
\]
for $a,\beta>0$ and $\eta>1$. It also determines the asymptotic
number of levels whose least member lies below a given bound.
These formulas show how the least members tend to infinity even
though every level remains infinite.

\item \textbf{Divisibility and prime factors at higher orders.}
Every fixed integer divides every member of all sufficiently
high levels in the general fixed-abscissa class. When the
abscissa is concave as a function of $\log n$, we prove
order-dependent inequalities for every prime exponent
(Theorem~\ref{thm:windows}), an explicit sufficient order for
divisibility by a prescribed integer, and
$m_k\mid m_{k+1}$ (Corollary~\ref{cor:effective}). For abscissas
$(\log n)^\alpha$, $0<\alpha\le1$,
Theorem~\ref{thm:prime-quotients} further gives
\[
 \limsup_{k\to\infty}
 \frac{\log\bigl(1+\omega(m_{k+1}/m_k)\bigr)}{k}
 =\frac1\alpha,
\]
where $\omega$ counts distinct prime factors; the same equality
holds when prime factors are counted with multiplicity.
In particular, the quotients have unbounded prime counts.
This contrasts with the classical successive-parameter
transitions studied by Erd\H{o}s and Nicolas
\cite[Proposition 4]{EN}. We also give consecutive members
of one fixed level whose quotient has four distinct prime
factors (Proposition~\ref{prop:fixed-level-jump}).
\end{enumerate}

We also construct families that retain every global maximum of $G$.
For suitable coordinates, their intersection consists exactly of
these maxima (Theorem~\ref{thm:exact}), and $m_k\to\infty$ is
equivalent to RH. This differs from the arithmetic families above,
whose least members tend to infinity unconditionally. If RH is false,
each arithmetic level contains counterexamples, but every fixed
counterexample is eventually excluded. Thus the same reduction of
Robin's criterion at each level can give quite different conclusions
about the first-contact sequence.

Section~\ref{sec:construction} gives the definitions and the nesting
theorem. The arithmetic families are studied in
Section~\ref{sec:arithmetic}, and families preserving global maxima
in Section~\ref{sec:maxima}. Section~\ref{sec:examples} contains
numerical examples, terminating chains obtained from the
Ramanujan--Nicolas bounds, and comparisons with other subclasses
of abundant numbers.

\section{Coordinates, nesting, and first contacts}\label{sec:construction}

We begin with the classical CA estimates needed in the proofs,
then define the contact sets and their first-contact sequences.

\subsection{Classical facts and the comparison set}
We fix
\begin{equation}\label{eq:D}
 n_*=55440,\qquad D=\{n\in\CA:n\ge n_*\},\qquad
 u_*=u(n_*),\quad \rho_*=\rho(n_*),\quad G_*=G(n_*).
\end{equation}
The integer $n_*$ is the first CA number above $5040$, and direct
evaluation gives $G_*<E$. It is included in every comparison used to define a supporting
line, but is excluded from the higher-order classes themselves. Robin's
CA reduction gives
\begin{equation}\label{eq:RobinD}
 \RH\quad\Longleftrightarrow\quad G(n)<E\quad(n\in D).
\end{equation}
We use the unconditional estimate \cite{Robin}
\begin{equation}\label{eq:Robinupper}
 \rho(n)\le Eu(n)+\frac{0.6483}{u(n)}\qquad(n\ge3).
\end{equation}
If RH is false, there are arbitrarily large CA numbers with
$G(n)>E$. Indeed, Robin's oscillation theorem gives arbitrarily
large strict counterexamples, and his reduction between
consecutive CA numbers transfers these to unbounded CA numbers.
The geometric reduction follows by placing $\log\rho$ below its
upper concave envelope at $x=\log n$: the boundary
$\log(E\log x)$ is concave, so a counterexample forces a
counterexample at an endpoint of its CA interval.

We shall use the following classical estimates.

\begin{lemma}\label{lem:CA-scale}
As $n\to\infty$ through CA numbers,
\begin{equation}\label{eq:CAscale}
 P(n)\sim\log n,\qquad \rho(n)\sim E u(n),\qquad G(n)\to E.
\end{equation}
For every fixed prime $p$,
\begin{equation}\label{eq:fixedprime}
 v_p(n)\log p=u(n)+\log u(n)+O_p(1).
\end{equation}
For every real $v\to\infty$ there is a CA number $n_v$ with
$u(n_v)=v+o(1)$.
\end{lemma}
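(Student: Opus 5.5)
The plan is to work from the parametric description of CA numbers recalled in the introduction: for $\varepsilon>0$ let $N_\varepsilon$ be a maximizer of \eqref{eq:classical}. Then $v_p(N_\varepsilon)$ is the number of $j\ge1$ with $F(p,j)\ge\varepsilon$, up to the tie convention. Every CA number arises this way, so it suffices to prove each estimate uniformly in $\varepsilon\to0^+$, with $n=N_\varepsilon$.

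\textbf{Step 1 (the exponent formula).} Since $F(p,1)=\log(1+1/p)/\log p$ is decreasing in $p$ and $F(p,1)\sim 1/(p\log p)$, the largest prime $P=P(n)$ satisfies $P\log P\asymp 1/\varepsilon$; more precisely $\varepsilon\sim 1/(P\log P)$. For a fixed prime $p$ we have $F(p,j)=\log(1+1/S_j(p))/\log p = p^{-j}(1-1/p)/\log p\,(1+O(p^{-j}))$. Solving $F(p,j)\ge\varepsilon$ gives
\[
 v_p(n)\log p=\log(1/\varepsilon)-\log\log p+\log(1-1/p)+O_p(\varepsilon),
\]
up to an error of at most $\log p$ from the integer part. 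Hence $v_p(n)\log p=\log(1/\varepsilon)+O_p(1)$.

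\textbf{Step 2 (size of $n$).} We have $\log n=\sum_p v_p(n)\log p$. Primes with $v_p=1$ are those with $p\le P$ and $F(p,2)<\varepsilon$, i.e.\ roughly $p>\sqrt{P}$; higher exponents contribute $O(\sqrt{P}\log P)$ in total. By the prime number theorem, $\log n=\theta(P)+O(\sqrt P\log^2P)\sim P$. So $P(n)\sim\log n$, and $\log(1/\varepsilon)=\log P+\log\log P+o(1)=u(n)+\log u(n)+o(1)$. This is where the uniform relation $\log P=\log\log n+o(1)$ is needed, and combined with Step 1 it yields \eqref{eq:fixedprime}.

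\textbf{Step 3 ($\rho$).} We write $\rho(n)=\prod_{p\le P}(1-p^{-v_p-1})/(1-1/p)$. Mertens' theorem gives $\prod_{p\le P}(1-1/p)^{-1}\sim E\log P$. It remains to show that the correction $\prod_{p\le P}(1-p^{-v_p-1})\to1$. Since $v_p\ge1$ for all $p\le P$, this product is bounded by $\prod_{p}(1-p^{-2})$ restricted to the primes where $p^{v_p+1}$ is small. Using $p^{v_p+1}\gg 1/(\varepsilon\log p)\asymp P$ from the definition, we get $\sum_{p\le P}p^{-v_p-1}\ll \pi(P)/P\to0$. Hence $\rho(n)\sim E\log P\sim Eu(n)$, and $G\to E$ follows.

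\textbf{Step 4 (density of values).} As $\varepsilon$ decreases, $N_\varepsilon$ changes by multiplication by a single prime $p\le P(1+o(1))$ (or by a finite set of primes in case of ties). Each multiplier has size at most $e^{O(\log P)}$, so consecutive CA numbers $n<n'$ satisfy $\log n'-\log n=O(\log\log n)$. Hence $u(n')-u(n)=O(\log\log n/\log n)=o(1)$, and $u$ runs through all large reals with gaps tending to $0$. Given $v$, we take the CA number with $u$ closest to $v$.

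\textbf{Main obstacle.} The main obstacle is the uniform control in Step 3, i.e.\ bounding the correction factor, together with the precise $+\log u$ term in Step 1. The former needs $p^{v_p+1}\gg P$ uniformly, which I would derive from $F(p,v_p+1)<\varepsilon$. Alternatively, all of this can be quoted from Alaoglu--Erd\H{o}s \cite{AE} and Erd\H{o}s--Nicolas \cite{EN}.
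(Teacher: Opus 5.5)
Your proposal is correct. For the first three assertions it follows the paper's line: parametrize by $\varepsilon$, get $P(n)\sim\log n$ from a bound of the form $\vartheta(P(n))\le\log n\le\vartheta(P)+o(P)$, deduce $\varepsilon\sim1/(P\log P)$ and \eqref{eq:fixedprime} from $F(p,j)\asymp_p p^{-j}$, and finish with Mertens' theorem. The paper quotes that bound; you rederive it by counting the primes with exponent at least $2$.

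The real differences are in two places. For the correction product $\prod_{p\le P}(1-p^{-v_p-1})$, the paper uses dominated convergence: each fixed exponent tends to infinity, and the logarithms of the factors are dominated by a constant times $p^{-2}$. You instead use the uniform bound $p^{v_p+1}\ge1/(3\varepsilon\log p)\gg P$, which comes from $F(p,v_p+1)\le\varepsilon$. This is more quantitative and gives a correction of size $O(1/\log P)$. Strictly, $1/(\varepsilon\log p)$ is $\gg P$ rather than $\asymp P$ for small $p$, but only the lower bound is used.

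For the last assertion, the paper builds $n_v$ directly. It takes a prime $p$ with $\log p=v+o(1)$ and the largest maximizer at $\varepsilon=F(p,1)$. That maximizer has largest prime factor exactly $p$, so $u(n_v)=\log p+o(1)$ by the first part.

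Your route through gaps between consecutive CA numbers also works and even gives a rate, but the tie case needs one more sentence. A product of an unbounded number of tying primes need not be $e^{O(\log P)}$. You can fix this in either of two ways:
\begin{itemize}
\item invoke the classical Alaoglu--Erd\H{o}s fact that at most two primes share a critical value; or
\item note that under the all-ties convention every $N^-d$, with $d$ dividing the product of the tying primes, is itself CA. Then consecutive CA numbers differ by a factor at most the largest tying prime, which does not exceed $p^+(n)$.
\end{itemize}
The paper's direct construction avoids this issue entirely.
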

\begin{proof}
For a maximizing parameter $\varepsilon$, write
$\varepsilon=F(z,1)$ with $z$ real. The prime-increment rule gives
$P(n)\le z\le p^+(n)$, with endpoints allowed; here $p^+(n)$ is
the next prime after $P(n)$. The standard CA estimate
\cite[Lemma 3]{CNS} is
\[
 \vartheta(P(n))\le\log n\le\vartheta(z)+O(\sqrt z).
\]
Together with the prime number theorem, it gives $P(n)\sim\log n$.
Also $\varepsilon\sim1/(P(n)\log P(n))$. For fixed $p$, the
inequalities
$F(p,v_p(n)+1)\le\varepsilon\le F(p,v_p(n))$ and
$F(p,j)\asymp_p p^{-j}$ give \eqref{eq:fixedprime}.

In the Euler product
\[
 \rho(n)=\prod_{p\le P(n)}(1-p^{-v_p(n)-1})
                  \prod_{p\le P(n)}(1-p^{-1})^{-1},
\]
the first product tends to $1$: each fixed exponent tends to
infinity, while the logarithms of the factors are dominated by
a summable constant multiple of $p^{-2}$. Mertens' product theorem
then gives $\rho(n)\sim E\log P(n)\sim Eu(n)$.
Finally choose a prime $p\sim e^v$ and the largest maximizer at
$\varepsilon=F(p,1)$. Its largest prime factor is $p$, so
$\log n\sim p$ and $u(n)=v+o(1)$.
\end{proof}

For completeness, the increment rule in the introduction follows
from the exact identity
\[
 \frac{\rho(p^j)p^{-j\varepsilon}}
      {\rho(p^{j-1})p^{-(j-1)\varepsilon}}
 =(1+1/S_j(p))p^{-\varepsilon}.
\]
The increments decrease strictly with $j$, and multiplicativity
permits independent optimization at each prime. At a noncritical
parameter this also gives the familiar formula
\[
 v_p(n_\varepsilon)=
 \left\lfloor\frac{\log(p^{1+\varepsilon}-1)
                   -\log(p^\varepsilon-1)}{\log p}\right\rfloor-1.
\]
At critical parameters the increment rule, rather than a convention
for this floor, specifies all ties.

\subsection{The contact construction}
For coordinates $T=(X,Y)$ on $D$, define
\begin{equation}\label{eq:contact}
 \Cl_D(X,Y)=\left\{n\in D\setminus\{n_*\}:
 \begin{array}{l}
 \text{there is a }\lambda\ge0\text{ such that}\\[-2pt]
 Y(n)+\lambda X(n)\le Y(m)+\lambda X(m)\quad(m\in D)
 \end{array}\right\}.
\end{equation}
A coordinate scheme $T=(T_k)_{k\ge1}$ defines
$C^{(k)}(T)=\Cl_D(X_k,Y_k)$. Horizontal supports and all ties are
retained. The supporting linear function must attain its minimum at an
integer; limiting points of the hull are not included. The definition always compares with
the same set $D$.

\begin{theorem}[Concave changes of coordinates]\label{thm:nesting}
Suppose
\[
 X_{k+1}=\phi_k\circ X_k,\qquad
 Y_{k+1}=\psi_k\circ Y_k,
\]
where $\phi_k,\psi_k$ are increasing concave differentiable
functions with positive derivatives on intervals containing the
respective coordinate ranges. Then
$C^{(k+1)}(T)\subseteq C^{(k)}(T)$.
\end{theorem}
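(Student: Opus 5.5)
The plan is to verify the inclusion directly from the definition \eqref{eq:contact}. A supporting line in the new coordinates, linearized at the contact point by the tangent-line inequality for concave functions, gives a supporting line in the old coordinates. Fix $k$ and let $n\in C^{(k+1)}(T)$. Then $n\in D\setminus\{n_*\}$, and there is a $\mu\ge0$ such that
\[
 \psi_k(Y_k(n))+\mu\,\phi_k(X_k(n))\le \psi_k(Y_k(m))+\mu\,\phi_k(X_k(m))\qquad(m\in D).
\]
We must produce a $\lambda\ge0$ with $Y_k(n)+\lambda X_k(n)\le Y_k(m)+\lambda X_k(m)$ for all $m\in D$.

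The key step uses concavity and differentiability of $\psi_k$ on an interval $I$ containing the range of $Y_k$. For $s,t\in I$ we have the tangent bound $\psi_k(s)\le\psi_k(t)+\psi_k'(t)(s-t)$, and similarly for $\phi_k$ on an interval $J$ containing the range of $X_k$. Apply these with $t=Y_k(n)$, $s=Y_k(m)$ and with $t=X_k(n)$, $s=X_k(m)$. Since $\mu\ge0$, the supporting inequality gives
\[
 0\le\bigl[\psi_k(Y_k(m))-\psi_k(Y_k(n))\bigr]+\mu\bigl[\phi_k(X_k(m))-\phi_k(X_k(n))\bigr]
 \le \psi_k'(Y_k(n))\bigl(Y_k(m)-Y_k(n)\bigr)+\mu\,\phi_k'(X_k(n))\bigl(X_k(m)-X_k(n)\bigr).
\]
Because $\psi_k'(Y_k(n))>0$, we can divide by it and set
\[
 \lambda=\frac{\mu\,\phi_k'(X_k(n))}{\psi_k'(Y_k(n))}.
\]
This $\lambda$ is nonnegative since $\mu\ge0$ and $\phi_k'>0$. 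The result is exactly the required inequality $Y_k(n)+\lambda X_k(n)\le Y_k(m)+\lambda X_k(m)$ for every $m\in D$. Hence $n\in C^{(k)}(T)$.

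There is no serious obstacle, but two points need attention. First, the tangent inequality must be valid at the contact point even when $X_k(n)$ or $Y_k(n)$ lies at an endpoint of the relevant interval. This is why the hypothesis asks for differentiability on an interval containing the coordinate range, and one-sided derivatives suffice there. Second, the positivity of $\psi_k'$ is what keeps the orientation of the inequality, so that we still minimize $Y_k+\lambda X_k$. Positivity of $\phi_k'$, together with $\mu\ge0$, guarantees $\lambda\ge0$. In particular, a horizontal support ($\mu=0$) is carried to a horizontal support ($\lambda=0$), consistent with retaining horizontal supports. Ties cause no difficulty, because the argument uses only non-strict inequalities.
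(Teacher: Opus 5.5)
Your proof is correct and is essentially the paper's own argument. You apply the tangent-line inequality for the concave maps $\phi_k,\psi_k$ at the contact point, then divide by $\psi_k'(Y_k(n))>0$ to obtain the level-$k$ support parameter $\lambda=\mu\,\phi_k'(X_k(n))/\psi_k'(Y_k(n))\ge0$, a formula the paper leaves implicit.
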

\begin{proof}
Let $n$ have support parameter $\lambda$ at level $k+1$.
Concavity gives, for every $m\in D$,
\begin{align*}
 0&\le \psi_k(Y_k(m))-\psi_k(Y_k(n))
       +\lambda[\phi_k(X_k(m))-\phi_k(X_k(n))]\\
 &\le\psi_k'(Y_k(n))[Y_k(m)-Y_k(n)]
       +\lambda\phi_k'(X_k(n))[X_k(m)-X_k(n)].
\end{align*}
Division by $\psi_k'(Y_k(n))>0$ gives a level-$k$ support.
\end{proof}

The theorem does not use any arithmetic properties of the coordinates.
In numerical computations, one must also show that points beyond the
computed range, including limiting points of the hull, do not change
the contacts. We give the necessary estimates in
Section~\ref{sec:certified}.

\begin{definition}\label{def:classes}
A nested family $(C^{(k)})$ is \emph{Robin-preserving at each level}
if, for every $k\ge1$,
\begin{equation}\label{eq:levelRobin}
 [G(n)<E\text{ for all }n\in C^{(k)}]
 \quad\Longleftrightarrow\quad
 [G(n)<E\text{ for all }n\in D].
\end{equation}
It is \emph{maximum-preserving} if it contains, at every level,
the set $\mathcal M=\argmax_D G$, interpreted as empty when the
supremum is not attained. Equivalently, it preserves the global
minima of the ordinate $-G$. It has the \emph{exact-intersection
property} if $\bigcap_k C^{(k)}=\mathcal M$.
\end{definition}

For a Robin-preserving family, validity of Robin's inequality on
one level implies its validity on every level and on $D$.
We are particularly interested in such families with infinite levels
and empty intersection. These two properties and nesting do not by
themselves imply preservation of Robin's criterion.

Maximum preservation implies \eqref{eq:levelRobin} in the present
setting. Under $\neg\RH$, Lemma~\ref{lem:CA-scale} and strict
counterexamples imply that $G$ attains a maximum greater than $E$;
under RH, \eqref{eq:levelRobin} is immediate. The converse need
not hold: the arithmetic examples below are Robin-preserving with
empty intersection even under $\neg\RH$.

\begin{proposition}[First-contact sequences]\label{prop:minima}
For any nested sets $C^{(k)}\subseteq\mathbb N$, put
$m_k=\min C^{(k)}$, with $\min\varnothing=\infty$. Then $(m_k)$
is nondecreasing and
\begin{equation}\label{eq:mincriterion}
 \bigcap_{k\ge1}C^{(k)}=\varnothing
 \quad\Longleftrightarrow\quad m_k\longrightarrow\infty.
\end{equation}
If the sequence is bounded, it is eventually constant, with value
$\min\bigcap_k C^{(k)}$.
\end{proposition}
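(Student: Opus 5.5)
The argument is elementary and uses only the nesting $C^{(k+1)}\subseteq C^{(k)}$ and the well-ordering of $\mathbb N$, interpreting $m_k\to\infty$ in the extended sense (so eventually $m_k=\infty$ is allowed).

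\emph{Monotonicity.} If $C^{(k+1)}$ is empty, then $m_{k+1}=\infty\ge m_k$. Otherwise $m_{k+1}\in C^{(k+1)}\subseteq C^{(k)}$, so $m_k\le m_{k+1}$. Hence $(m_k)$ is nondecreasing in $\mathbb N\cup\{\infty\}$. Consequently it either tends to $\infty$ or is bounded by some integer $B$.

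\emph{The bounded case.} Assume $m_k\le B$ for all $k$. A nondecreasing sequence of positive integers bounded by $B$ takes finitely many values. It is therefore eventually constant, say $m_k=M$ for $k\ge K$. We show that $M$ lies in the intersection and is its least element.
\begin{itemize}
\item For $k\ge K$ we have $M=m_k\in C^{(k)}$.
\item For $k<K$, nesting gives $M\in C^{(K)}\subseteq C^{(k)}$.
\item Hence $M\in\bigcap_k C^{(k)}$.
\item Conversely, if $n\in\bigcap_k C^{(k)}$, then $n\ge m_k=M$ for every $k\ge K$.
\end{itemize}
So $M=\min\bigcap_k C^{(k)}$, which proves the last sentence. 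It also shows that a bounded sequence forces a nonempty intersection.

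\emph{The equivalence.} The bounded case gives "$\Leftarrow$" in contrapositive form: if $m_k\not\to\infty$, then by monotonicity the sequence is bounded, so the intersection is nonempty. For "$\Rightarrow$" we also argue by contrapositive. If $n\in\bigcap_k C^{(k)}$, then $m_k\le n$ for all $k$, so $m_k\not\to\infty$.

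None of these steps is a real obstacle. The only point needing care is the convention $\min\varnothing=\infty$. It must be consistent with monotonicity, which holds because once a level is empty all later levels are empty. It also covers the case where some level is empty, in which the intersection is trivially empty and $m_k$ is eventually $\infty$.
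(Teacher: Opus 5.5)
Your proof is correct and is essentially the paper's argument: monotonicity comes from nesting, a bounded nondecreasing integer sequence is eventually constant with its eventual value lying in every level (nesting covers the early levels), and any $n$ in the intersection gives $m_k\le n$ for all $k$. The only slip is cosmetic: your two contrapositive arguments are labeled the wrong way round (the bounded-case argument proves ``$\Rightarrow$'' and the intersection-element argument proves ``$\Leftarrow$''), but both implications are established, so the equivalence stands.
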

\begin{proof}
Nesting gives monotonicity. A bounded nondecreasing integer sequence
is eventually constant, say $m_k=m$. Then $m$ belongs to every
level. Conversely, a member of the intersection bounds every $m_k$.
The assertion about the least member follows from $m_k\le n$ for
every $n$ in the intersection.
\end{proof}

Even if every level is infinite, the intersection may be finite
or empty. For instance, $\{1\}\cup\{k+1,k+2,\ldots\}$ has
intersection $\{1\}$. We distinguish \emph{escape through nonempty
levels}, where every $m_k$ is finite, from \emph{termination}, where
some $m_k=\infty$.

\begin{problem}[The first-contact problem]\label{prob:first}
For a given nested family of integer sets, determine whether its
first-contact sequence stabilizes, escapes through nonempty levels,
or terminates. In the escape case, determine its growth and the
arithmetic restrictions that hold uniformly at high levels.
\end{problem}

\section{Robin-preserving arithmetic families}\label{sec:arithmetic}

We give conditions on the coordinates that preserve Robin's criterion
and ensure empty intersection. For two classes of abscissas, we then
determine the growth of $m_k$ and prove divisibility properties and
bounds on prime exponents. The results are unconditional, except
where $\neg\RH$ is explicitly assumed.

\subsection{Transfer of Robin counterexamples}
An \emph{arithmetic coordinate scheme} has the form
\begin{equation}\label{eq:general-arithmetic}
 T_k(n)=(F_k(u(n)),-H_k(\rho(n))),\qquad
 C^{(k)}(T)=\Cl_D(F_k(u),-H_k(\rho)).
\end{equation}
The functions $F_k$ and $H_k$ may both depend on the level.
We first treat a single contact set.

\begin{theorem}[Concave-boundary transfer]\label{thm:transfer}
Let $F,H$ be positive, increasing, unbounded $C^2$ functions,
with positive first derivatives on the relevant ranges. Suppose
\begin{equation}\label{eq:transferA}
 \frac{H(Eu+0.6483/u)}{F(u)}\longrightarrow0,
\end{equation}
and, for all sufficiently large $u$,
\begin{equation}\label{eq:transferB}
 \frac{F''(u)}{F'(u)}
 \ge E\frac{H''(Eu)}{H'(Eu)}.
\end{equation}
Then $\Cl_D(F(u),-H(\rho))$ is infinite and is a Robin test set:
Robin's inequality on this set is equivalent to Robin's inequality
on $D$. If RH is false, this contact set contains infinitely many
strict counterexamples.
\end{theorem}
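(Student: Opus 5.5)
The plan is to work with the linear functional $\Phi_\lambda(n)=H(\rho(n))-\lambda F(u(n))$ on $D$. A point $n\neq n_*$ lies in $\Cl_D(F(u),-H(\rho))$ exactly when it maximizes $\Phi_\lambda$ over $D$ for some $\lambda\ge0$.

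\emph{Existence of maximizers and infinitude.} By \eqref{eq:Robinupper} and the monotonicity of $H$, $H(\rho(n))\le H(Eu+0.6483/u)$. Hypothesis \eqref{eq:transferA} then gives $H(\rho(n))/F(u(n))\to0$ along $D$. Hence for each fixed $\lambda>0$ we have $\Phi_\lambda(n)\to-\infty$, so only finitely many $n\in D$ compete and the maximum of $\Phi_\lambda$ is attained at an integer. To get infinitely many contacts, let $\lambda\downarrow0$. By Lemma~\ref{lem:CA-scale}, $\rho\to\infty$ on $D$, and $H$ is unbounded, so $\sup_D H(\rho)=\infty$. Since $\Phi_\lambda(n)\ge H(\rho(n))-\lambda F(u(n))$ for any fixed $n$, the maximum value tends to $\infty$ as $\lambda\to0$. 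This forces $H(\rho(n_\lambda))\to\infty$, so the maximizers are unbounded. In particular they differ from $n_*$ once $\lambda$ is small, and the contact set is infinite.

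\emph{Robin test.} One implication is trivial, since the contact set lies in $D$. For the converse we assume $\neg\RH$ and produce counterexamples inside the contact set. Write $s=F(u)$ and $g(s)=H(E\,F^{-1}(s))$, so that the Robin boundary $\rho=Eu$ becomes the curve $Y=-g(X)$. A direct computation gives $\frac{d}{du}\log g'=E H''(Eu)/H'(Eu)-F''(u)/F'(u)$, which is $\le0$ by \eqref{eq:transferB}. Thus $g$ is concave and $-g$ is convex and decreasing for $u\ge u_0$. Strict counterexamples $n$ (those with $G(n)>E$) are exactly the points strictly below this curve. By the discussion after \eqref{eq:Robinupper}, there are arbitrarily large such $n_0\in D$.

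For such an $n_0$, take $\lambda=g'(F(u(n_0)))>0$ and a maximizer $m$ of $\Phi_\lambda$. The line $Y=c-\lambda X$ through $(F(u_0'),-g(F(u_0')))$ with $u_0'=u(n_0)$ is tangent to the convex curve, and $n_0$ lies strictly below it. Hence $m$ also lies strictly below this tangent. Every non-counterexample with $u\ge u_0$ lies on or above the curve, and therefore above the tangent. So $m$ is a counterexample unless $m$ belongs to the finite set $D_0=\{n\in D: u(n)<u_0\}$. Repeating the argument with ever larger $n_0$, and noting that $\Phi_\lambda(m)\ge\Phi_\lambda(n_0)$ with $H(\rho(n_0))$ unbounded, will then yield infinitely many distinct counterexample contacts.

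\emph{Main obstacle.} The hard step is excluding the finitely many points of $D_0$, where convexity is not available. My first attempt uses the concavity of $g$ on $[F(u_0),\infty)$, which gives
\[
 c-\lambda X\le -g(F(u_0))+\lambda\,(F(u_0)-X)\qquad (X\le F(u_0)).
\]
I would combine this with $\lambda\le g'(F(u_0))$ to compare the tangent with the bounded set of values $-H(\rho(m))$, $m\in D_0$. If this comparison is not sharp enough, the fallback is to pick $n_0$ maximizing the excess $H(\rho)-g(F(u))$ over a long range, and to use that $\Phi_\lambda(n_0)\to\infty$ while $\Phi_\lambda$ stays bounded on $D_0$ for $\lambda$ in a compact subinterval of $(0,\infty)$. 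In that case I would also need to rule out $\lambda\to0$ along the chosen $n_0$. Separately, I must check that a tie at $n_*$ does not remove the contact, which holds because every counterexample differs from $n_*$, as $G_*<E$.
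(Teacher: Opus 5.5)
Your overall architecture matches the paper's proof. The infinitude argument via $\lambda\downarrow0$ is the same, and you correctly make the Robin boundary concave through $g=H\circ(E\,F^{-1})$, take the tangent at a counterexample $n_0$ with slope $\lambda=g'(z_0)$, $z_0=F(u(n_0))$, and observe that a maximizer of $\Phi_\lambda$ lying in the concavity range is a strict counterexample.

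There is, however, a genuine gap, and it is exactly the step you flag as the main obstacle. None of your remedies closes it.
\begin{itemize}
\item Unboundedness of $H(\rho(n_0))$ does not give escape. Indeed $\Phi_\lambda(n_0)=H(\rho(n_0))-g'(z_0)z_0$, and the subtracted term can grow just as fast.
\item Your ``first attempt'' only bounds the tangent from above on $X\le F(u_0)$. To use it you would need $\rho(m)<Eu_0$, with a margin, for every $m\in D_0$. Nothing in your argument controls this under $\neg\RH$, since points of $D_0$ near the threshold may themselves be counterexamples.
\item The fallback of keeping $\lambda$ in a compact subinterval of $(0,\infty)$ cannot work, because $\lambda=g'(z_0)\to0$ is forced. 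By \eqref{eq:transferA} and monotonicity of $H$, $g(F(u))=H(Eu)\le H(Eu+0.6483/u)$, so $g(z)/z\to0$. Concavity then gives $g'(z)\le (g(z)-g(z_1))/(z-z_1)\to0$.
\end{itemize}

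The missing idea is to control the tangent intercept $I(z)=g(z)-zg'(z)$. This is where \eqref{eq:transferA} must enter the second half of the proof; you never use it there.
\begin{itemize}
\item Since $\rho(n_0)>Eu(n_0)$ and $H$ is strictly increasing, $\Phi_\lambda(n_0)>I(z_0)$.
\item On the concavity range, $I'=-zg''\ge0$ and $(g/z)'=-I/z^2$.
\item If $I\le M$, integrating from $z$ to $\infty$ with $g(z)/z\to0$ gives $g(z)/z\le M/z$, so $g\le M$. This contradicts unboundedness of $g$, hence $I(z_0)\to\infty$.
\item Because $\lambda F\ge0$, you have $\Phi_\lambda\le H\circ\rho$ for every $\lambda\ge0$, and this is bounded on any finite set.
\end{itemize}
So for large $n_0$ every maximizer escapes $D_0$, and indeed every fixed finite set, with no lower bound on $\lambda$ needed. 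This same divergence is what yields infinitely many distinct counterexample contacts. Without it, repeating your construction with larger $n_0$ could return maximizers confined to a bounded set. This is how the paper argues.
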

\begin{proof}
For every $\lambda>0$, \eqref{eq:Robinupper} and
\eqref{eq:transferA} give
\[
 H(\rho(n))-\lambda F(u(n))\longrightarrow-\infty
 \qquad(n\to\infty,\ n\in D).
\]
The maximum is therefore attained. As $\lambda\downarrow0$,
its maximizers escape every finite set, because $H(\rho(n))$ is
unbounded by Lemma~\ref{lem:CA-scale}. This proves infinitude.

Put $z=F(u)$ and $B(z)=H(EF^{-1}(z))$. Condition
\eqref{eq:transferB} says exactly that $B''(z)\le0$ eventually.
Moreover, $B$ is increasing and unbounded, and $B(z)/z\to0$.
The intercept of its tangent line,
\[
 I(z)=B(z)-zB'(z)
\]
increases eventually to infinity. Indeed, $I'=-zB''\ge0$; if
$I$ were bounded above by $M$, integration of
$(B(z)/z)'=-I(z)/z^2$, using $B(z)/z\to0$, would imply
$B(z)\le M$, a contradiction. In particular, $B'(z)\to0$.

If RH is false, choose CA counterexamples $n_j\to\infty$ with
$\rho(n_j)>Eu(n_j)$, and set
$z_j=F(u(n_j))$, $\lambda_j=B'(z_j)>0$. At $n_j$ the objective
$H(\rho)-\lambda_j F(u)$ is strictly greater than $I(z_j)$.
Choose a maximizer $N_j$. Its objective tends to infinity, so
$N_j$ escapes every fixed finite set. For large $j$ it lies in
the concavity range. The tangent inequality
$B(z)\le B(z_j)+\lambda_j(z-z_j)$ then shows that
$H(\rho(N_j))>B(F(u(N_j)))$. Hence $G(N_j)>E$.
These escaping maximizers give infinitely many counterexamples in
the contact set. The implication under RH is immediate.
\end{proof}

\subsection{The general family theorem}\label{sec:general-family}
\begin{theorem}[Robin-preserving arithmetic families]\label{thm:general-family}
Consider the scheme \eqref{eq:general-arithmetic}. For every $k$,
let $F_k,H_k$ be positive, increasing, unbounded $C^2$ functions
with positive first derivatives on the relevant ranges. Assume:
\begin{enumerate}
\item The coordinate transitions satisfy
\begin{equation}\label{eq:general-nesting}
 F_{k+1}=\phi_k\circ F_k,\qquad
 -H_{k+1}=\psi_k\circ(-H_k),
\end{equation}
where $\phi_k,\psi_k$ are increasing concave differentiable maps
with positive derivatives on intervals containing the coordinate ranges.
\item At each fixed level,
\begin{equation}\label{eq:general-tail}
 \frac{H_k(Eu+0.6483/u)}{F_k(u)}\longrightarrow0
 \qquad(u\to\infty).
\end{equation}
\item For each fixed $k$, and all sufficiently large $u$,
\begin{equation}\label{eq:general-boundary}
 \frac{F_k''(u)}{F_k'(u)}\ge
 E\frac{H_k''(Eu)}{H_k'(Eu)}.
\end{equation}
\end{enumerate}
Then $(C^{(k)}(T))$ is a nested family of infinite sets that
preserves Robin's criterion. For every fixed $k\ge1$,
\begin{equation}\label{eq:general-Robin}
 \begin{split}
 &[\sigma(n)<En\log\log n\quad\text{for all }n\in C^{(k)}(T)]\\
 &\quad\Longleftrightarrow
 [\sigma(n)<En\log\log n\quad\text{for all }n\in\CA,\ n>5040]\\
 &\quad\Longleftrightarrow\RH.
 \end{split}
\end{equation}
Thus validity on any one level implies validity on every level
and on the entire CA tail. In particular, for $k\ge2$, testing
Robin's inequality on $C^{(k-1)}(T)$ reduces to testing it on
$C^{(k)}(T)$. If RH is false, every level contains
infinitely many strict counterexamples.
\end{theorem}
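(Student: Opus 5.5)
The plan is to assemble the theorem from three results already established: Theorem~\ref{thm:nesting} for nesting, Theorem~\ref{thm:transfer} for each individual level, and the CA reduction \eqref{eq:RobinD}.

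\emph{Nesting.} The scheme \eqref{eq:general-arithmetic} has coordinates $X_k=F_k\circ u$ and $Y_k=-H_k\circ\rho$. Hypothesis \eqref{eq:general-nesting} says that $X_{k+1}=\phi_k\circ X_k$ and $Y_{k+1}=\psi_k\circ Y_k$, with $\phi_k,\psi_k$ increasing, concave, and with positive derivatives on the relevant ranges. Theorem~\ref{thm:nesting} therefore applies directly and gives $C^{(k+1)}(T)\subseteq C^{(k)}(T)$. The ordinate is written as $-H_k$ on purpose, so that the composition is applied to $Y_k$ itself. This is the only point where care is needed: one must apply $\psi_k$ to $-H_k$, not to $H_k$.

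\emph{Each level.} Fix $k$ and apply Theorem~\ref{thm:transfer} with $F=F_k$ and $H=H_k$. Its hypotheses are positivity, monotonicity, unboundedness, $C^2$ regularity and positive derivatives, which are assumed here, together with \eqref{eq:transferA} and \eqref{eq:transferB}. These last two are exactly \eqref{eq:general-tail} and \eqref{eq:general-boundary}. We must also check that $\Cl_D(F_k(u),-H_k(\rho))$ is the contact set treated in Theorem~\ref{thm:transfer}. The support condition in \eqref{eq:contact} is $-H_k(\rho(n))+\lambda F_k(u(n))\le -H_k(\rho(m))+\lambda F_k(u(m))$. This says precisely that $n$ maximizes $H_k(\rho)-\lambda F_k(u)$ over $D$, which is the objective used in that proof. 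The transfer proof only uses $\lambda>0$, so the extra option $\lambda=0$ allowed in \eqref{eq:contact} can only enlarge the set, and the conclusions survive. The result is that $C^{(k)}(T)$ is infinite, and that under $\neg\RH$ it contains infinitely many strict counterexamples with $G>E$.

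\emph{The chain of equivalences.} If RH holds, then \eqref{eq:RobinD} gives $G<E$ on $D$. Since $C^{(k)}\subseteq D$, we get $G<E$ on $C^{(k)}$. If RH fails, the previous step produces some $n\in C^{(k)}$ with $G(n)>E$, which violates Robin's inequality on $C^{(k)}$. Hence Robin's inequality on $C^{(k)}$ is equivalent to RH. Robin's inequality on the CA tail above $5040$ is also equivalent to RH, by \eqref{eq:Robin} together with the CA reduction \eqref{eq:RobinD}; here $D$ is exactly that tail, since $n_*=55440$ is the first CA number above $5040$. We also rewrite $G(n)<E$ as $\sigma(n)<En\log\log n$. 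This gives \eqref{eq:general-Robin}. The remaining statements follow immediately. Validity on one level gives RH, hence validity everywhere. Under $\neg\RH$, every level contains infinitely many counterexamples.

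The main obstacle, to the extent there is one, is the bookkeeping in the first two steps. In the first, the sign convention must be handled so that Theorem~\ref{thm:nesting} applies to $-H_k$. In the second, the maximization in Theorem~\ref{thm:transfer} must be matched with the contact definition \eqref{eq:contact}, including the exclusion of $n_*$. That exclusion is harmless because the maximizers escape every finite set.
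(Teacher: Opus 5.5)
Your proof is correct and follows the paper's own argument. Nesting comes from Theorem~\ref{thm:nesting} via \eqref{eq:general-nesting}; infinitude and strict counterexamples at each fixed level come from Theorem~\ref{thm:transfer} via \eqref{eq:general-tail} and \eqref{eq:general-boundary}; and the link to RH comes from \eqref{eq:RobinD}, with $D$ being exactly the CA tail above $5040$. Your remark about $\lambda=0$ is unnecessary, since Theorem~\ref{thm:transfer} is already stated for the same set $\Cl_D(F(u),-H(\rho))$ with horizontal supports included (and no $\lambda=0$ maximizer exists anyway, because $H(\rho)$ is unbounded on $D$).
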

\begin{proof}
Condition \eqref{eq:general-nesting} and
Theorem~\ref{thm:nesting} give nesting. Apply
Theorem~\ref{thm:transfer} at each fixed level, using
\eqref{eq:general-tail} and \eqref{eq:general-boundary}.
It gives infinitude and equivalence of Robin's inequality on
$C^{(k)}(T)$ and on $D$. This is the CA tail above $5040$, including the reference number;
Robin's inequality on this tail is equivalent to RH
by \eqref{eq:RobinD}. The same transfer theorem gives infinitely
many strict counterexamples in every level under $\neg\RH$.
\end{proof}

The limits and eventual inequalities are taken with $k$ fixed;
no uniformity in $k$ is required. Empty intersection needs a
further condition.

\begin{proposition}[A condition for empty intersection]
\label{prop:general-empty}
For the arithmetic scheme \eqref{eq:general-arithmetic}, suppose
$F_k,H_k$ are strictly increasing. For $n_*<n<b$ in $D$, put
\begin{equation}\label{eq:general-chord-ratios}
 R_k(n,b)=\frac{H_k(\rho(n))-H_k(\rho_*)}
                    {H_k(\rho(b))-H_k(\rho_*)},\qquad
 \Theta_k(n,b)=\frac{F_k(u(n))-F_k(u_*)}
                         {F_k(u(b))-F_k(u_*)}.
\end{equation}
If, for every $n>n_*$ in $D$, there is a fixed $b>n$ in $D$ such that
\begin{equation}\label{eq:general-chord}
 \frac{R_k(n,b)}{\Theta_k(n,b)}\longrightarrow0
 \qquad(k\to\infty),
\end{equation}
then $\bigcap_k C^{(k)}(T)=\varnothing$. Together with the
hypotheses of Theorem~\ref{thm:general-family}, this gives a
Robin-preserving family with infinite levels and empty intersection,
and $m_k(T)\to\infty$, all unconditionally.
\end{proposition}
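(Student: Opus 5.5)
Fix $n\in D$ with $n>n_*$; the plan is to show that $n\notin C^{(k)}(T)$ for all large $k$. Since $n_*\notin C^{(k)}$ by definition, this gives $\bigcap_k C^{(k)}(T)=\varnothing$. Let $b>n$ be the element of $D$ supplied by \eqref{eq:general-chord}. The idea is that $n$ lies strictly above the chord joining the level-$k$ points of $n_*$ and $b$. Such a point cannot be touched by any supporting line of nonnegative slope parameter.

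Write $x_m=F_k(u(m))$ and $y_m=-H_k(\rho(m))$. Suppose $n\in C^{(k)}$ with support parameter $\lambda\ge0$. Testing the support inequality against $m=n_*$ and $m=b$ gives
\[
 y_n+\lambda x_n\le y_*+\lambda x_*,\qquad y_n+\lambda x_n\le y_b+\lambda x_b .
\]
I then take the convex combination with weights $1-\Theta_k$ and $\Theta_k$, where $\Theta_k=\Theta_k(n,b)$. Since $u_*<u(n)<u(b)$ and $F_k$ is strictly increasing, $\Theta_k\in(0,1)$, and $x_n=(1-\Theta_k)x_*+\Theta_k x_b$ exactly. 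The $\lambda$-terms therefore cancel, leaving
\[
 y_n\le(1-\Theta_k)y_*+\Theta_k y_b,
\]
which in terms of $H_k$ reads
\[
 H_k(\rho(n))-H_k(\rho_*)\ge\Theta_k\bigl(H_k(\rho(b))-H_k(\rho_*)\bigr).
\]

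I would next check the sign of the denominator of $R_k$. Here I need $\rho(b)>\rho_*$, which holds because CA numbers are superabundant and $b>n_*$. Since $H_k$ is strictly increasing, the denominator is positive, and dividing gives $R_k(n,b)\ge\Theta_k(n,b)$, that is, $R_k/\Theta_k\ge1$. This contradicts \eqref{eq:general-chord} for all large $k$. Hence $n$ lies in only finitely many levels, and the intersection is empty.

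For the final sentence, I invoke Theorem~\ref{thm:general-family}, which gives nesting, infinite levels and Robin preservation. Proposition~\ref{prop:minima} then converts the empty intersection into $m_k\to\infty$. Nothing in the argument uses RH, so all conclusions are unconditional. The only delicate points are the positivity of the denominators: $\rho(b)>\rho_*$ comes from superabundance, and $u(n)>u_*$ from $n>n_*$. These are needed so that the convex-combination step has weights in $(0,1)$ and the final division preserves the inequality. I expect no further obstacle.
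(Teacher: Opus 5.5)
Your proof is correct and follows the paper's argument. The paper also observes that a lower contact at $n$ must lie on or below the chord from the reference point to $b$, which forces $R_k(n,b)\ge\Theta_k(n,b)$ and contradicts \eqref{eq:general-chord}, and it cites Theorem~\ref{thm:general-family} and Proposition~\ref{prop:minima} for the remaining assertions; you simply spell out the convex-combination step and the positivity of the denominators that the paper leaves implicit.
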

\begin{proof}
The chord from the reference to $b$ has coefficient
$\Theta_k(n,b)$ at the endpoint $b$ when evaluated at the
abscissa of $n$. A lower contact at $n$ must therefore satisfy
$R_k(n,b)\ge\Theta_k(n,b)$. Condition
\eqref{eq:general-chord} eventually contradicts this inequality,
so every fixed $n$ is eventually excluded. The last assertions
follow from Theorem~\ref{thm:general-family} and
Proposition~\ref{prop:minima}.
\end{proof}

Under these conditions, $m_k$ tends to infinity. If $\neg\RH$ holds,
every level nevertheless contains infinitely many counterexamples,
and any sequence obtained by choosing one counterexample from each
level tends to infinity. This is why empty intersection alone does
not prove Robin's inequality.

\subsection{A general arithmetic hierarchy}
The preceding conditions are easy to satisfy when the abscissa
$F$ is fixed and the ordinates are increasing powers of the divisor
sum.

\begin{corollary}[A general class with a fixed abscissa]
\label{cor:general-F}
Let $F:[u_*,\infty)\to(0,\infty)$ be $C^2$, with $F'>0$, and
\begin{equation}\label{eq:F-curvature}
 u\frac{F''(u)}{F'(u)}\longrightarrow+\infty.
\end{equation}
For any nondecreasing positive sequence $s_k\to\infty$, define
\begin{equation}\label{eq:F-hierarchy}
 C_F^{(k)}=\Cl_D(F(u),-\rho^{s_k}).
\end{equation}
These sets are nested, every level is infinite, their intersection
is empty, and every fixed level satisfies
\eqref{eq:general-Robin}. In particular, proving Robin's inequality
on any one $C_F^{(k)}$ proves it on all CA numbers above $5040$.
Their first contacts tend to infinity. Every fixed positive integer
divides all members of all sufficiently high levels.
\end{corollary}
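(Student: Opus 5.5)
We verify the hypotheses of Theorem~\ref{thm:general-family} and Proposition~\ref{prop:general-empty} with $F_k=F$ and $H_k(\rho)=\rho^{s_k}$, and then prove the divisibility statement separately.

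\emph{Nesting.} We take $\phi_k=\mathrm{id}$. With $t=-\rho^{s_k}<0$ we have $-\rho^{s_{k+1}}=\psi_k(t)=-(-t)^{r}$, where $r=s_{k+1}/s_k\ge1$. On $t<0$ this function satisfies $\psi_k'(t)=r(-t)^{r-1}>0$ and $\psi_k''(t)=-r(r-1)(-t)^{r-2}\le0$. So Theorem~\ref{thm:nesting} applies.

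\emph{Tail condition.} Condition \eqref{eq:F-curvature} implies $(\log F')'\ge M/u$ for large $u$, for every $M$. Hence $F'(u)\gg u^M$, so $F(u)$ grows faster than every power of $u$. Since $H_k(Eu+0.6483/u)\asymp u^{s_k}$ for fixed $k$, the ratio in \eqref{eq:general-tail} tends to $0$.

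\emph{Boundary condition.} We compute $EH_k''(Eu)/H_k'(Eu)=(s_k-1)/u$. The left side of \eqref{eq:general-boundary} exceeds this for large $u$, again by \eqref{eq:F-curvature}. Theorem~\ref{thm:general-family} then gives nesting, infinitude of every level, and \eqref{eq:general-Robin}.

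\emph{Empty intersection.} We apply Proposition~\ref{prop:general-empty}. For fixed $n$ and any fixed $b>n$ in $D$, the quantity $\Theta_k(n,b)$ does not depend on $k$ and lies in $(0,1)$, since $F$ is strictly increasing. On the other side, $R_k(n,b)=(\rho(n)^{s_k}-\rho_*^{s_k})/(\rho(b)^{s_k}-\rho_*^{s_k})$. We need $\rho(b)>\rho(n)$, which holds because CA numbers are superabundant and $b>n$. Then $R_k(n,b)\le(\rho(n)/\rho(b))^{s_k}\to0$, since $s_k\to\infty$. (If $\rho(n)\le\rho_*$ then $R_k\le0$, which is even better.) Proposition~\ref{prop:minima} then gives $m_k\to\infty$.

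\emph{Divisibility.} This is the main obstacle, since we need a uniform statement for all members of a high level, not just the least one. It suffices to fix a prime power $p^e$ and show that every member of a high level has $v_p\ge e$. Every $n\in D$ is CA, so its prime exponents are governed by \eqref{eq:fixedprime} and are nonincreasing. Hence only finitely many $n\in D$ have $v_p(n)<e$: every CA number with $u(n)$ large enough has $v_p(n)\ge e$. Let $K$ be the largest such exceptional $n$. Apply the exclusion argument above to each $n\in D$ with $n\le K$, using the same $b>K$ for all of them. The uniform bound $R_k(n,b)\le(\rho(K)/\rho(b))^{s_k}$ and the bound $\Theta_k\ge\Theta(n_{\min},b)>0$, where $n_{\min}$ is the least element of $D\setminus\{n_*\}$, show that for $k\ge k_0(p,e)$ no $n\le K$ can be a contact. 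Therefore every member of $C_F^{(k)}$ exceeds $K$ and is divisible by $p^e$. Finally, an arbitrary integer has finitely many prime-power divisors, so we take the maximum of the corresponding $k_0$.

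The delicate point in the exclusion step is the chord argument of Proposition~\ref{prop:general-empty}. The reference point $n_*$ is always in the comparison set, and $b$ is a fixed member of $D$. So a supporting line through $n$ must lie below both the point for $n_*$ and the point for $b$, and this forces $R_k\ge\Theta_k$. Applying this with one common $b$ gives the uniformity we need.
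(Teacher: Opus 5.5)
Your proof is correct and matches the paper's argument on nesting (identity abscissa, $y\mapsto-(-y)^{s_{k+1}/s_k}$), the tail and curvature conditions, and the chord exclusion via superabundance. The only difference is the divisibility step: your uniform re-run of the exclusion with a common $b$ is valid but unnecessary, because the ``obstacle'' you flag does not arise — every member of level $k$ is at least $m_k\to\infty$, so \eqref{eq:fixedprime} finishes the argument directly, exactly as the paper does.
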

\begin{proof}
Condition \eqref{eq:F-curvature} implies that $F$ grows faster
than every fixed positive power of $u$. Indeed, for arbitrary
$M>0$ we eventually have $(\log F')'\ge M/u$; integration
gives $F'(u)\ge c u^M$ and then $F(u)\ge c' u^{M+1}$ for
large $u$. Taking $M$ larger than a prescribed power proves
the assertion, and in particular $F$ is unbounded.

For $H_k(t)=t^{s_k}$, this proves
\eqref{eq:general-tail}, while \eqref{eq:general-boundary} is
\[
 uF''(u)/F'(u)\ge s_k-1,
\]
which holds eventually for each $k$.
The abscissa transition is the identity, and the ordinate transition
is $y\mapsto-(-y)^{s_{k+1}/s_k}$, an increasing concave map
on $y<0$. Theorem~\ref{thm:general-family} applies.

For fixed $n_*<n<b$ in $D$, superabundance gives
$\rho_*<\rho(n)<\rho(b)$. Hence
\[
 R_k(n,b)=\frac{\rho(n)^{s_k}-\rho_*^{s_k}}
                    {\rho(b)^{s_k}-\rho_*^{s_k}}\longrightarrow0,
\]
whereas $\Theta_k(n,b)$ is fixed and positive. Thus
Proposition~\ref{prop:general-empty} gives empty intersection
and escape of the minima. Finally, \eqref{eq:fixedprime} implies
that every fixed prime exponent tends to infinity along all CA
numbers. Since every member of level $k$ is at least $m_k\to\infty$,
this proves the uniform divisibility assertion.
\end{proof}

\subsection{Power and logarithmic examples}
For $a,\beta>0$ define
\begin{equation}\label{eq:Afamily}
 A_k(a,\beta)=\Cl_D(e^{a u^\beta},-\rho^k).
\end{equation}
The special case
$A_k(\alpha)=A_k(\alpha,1)$ has first coordinate
$(\log n)^\alpha$. A second family, for $a>0$ and $\eta>1$, is
\begin{equation}\label{eq:Qfamily}
 Q_k(a,\eta)=\Cl_D(e^{a(\log u)^\eta},-\rho^k).
\end{equation}
The range $u\ge u_*>2$ makes both definitions unambiguous.

\begin{corollary}\label{thm:arith-structure}
Each family in \eqref{eq:Afamily} and \eqref{eq:Qfamily} is nested,
has infinite levels, and has empty intersection, unconditionally.
Every level is Robin-preserving. More generally,
\begin{equation}\label{eq:jointnest}
 A_\ell(a',\beta')\subseteq A_k(a,\beta)
 \quad\text{if }\ell\ge k,\ 0<a'\le a,\ 0<\beta'\le\beta.
\end{equation}
\end{corollary}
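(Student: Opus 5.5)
The plan is to derive the first four assertions from Corollary~\ref{cor:general-F}, with $s_k=k$. Take $F(u)=e^{au^\beta}$ for the family \eqref{eq:Afamily} and $F(u)=e^{a(\log u)^\eta}$ for the family \eqref{eq:Qfamily}. Both functions are positive and $C^2$ on $[u_*,\infty)$ with $F'>0$, so the only thing to check is the curvature condition \eqref{eq:F-curvature}.

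For $F=e^{g}$ we have $F''/F'=g'+g''/g'$.
\begin{itemize}
\item When $g=au^\beta$, this gives $uF''/F'=a\beta u^\beta+(\beta-1)$, which tends to $+\infty$ for every $a,\beta>0$.
\item When $g=a(\log u)^\eta$, we get $g'=a\eta(\log u)^{\eta-1}/u$ and $ug''/g'=(\eta-1)/\log u-1$. Hence $uF''/F'=a\eta(\log u)^{\eta-1}+O(1)$, which tends to $+\infty$ precisely because $\eta>1$.
\end{itemize}
Corollary~\ref{cor:general-F} then gives nesting, infinite levels, empty intersection and Robin preservation, all unconditionally. The range $u\ge u_*>2$ keeps $\log u>0$, so $Q_k$ is well defined there.

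For the joint nesting \eqref{eq:jointnest}, I would use Theorem~\ref{thm:nesting} directly, with the coordinates $(X,Y)=(e^{au^\beta},-\rho^k)$ and $(e^{a'u^{\beta'}},-\rho^\ell)$.
\begin{itemize}
\item The ordinate map is $\psi(y)=-(-y)^{\ell/k}$ on $y<0$. It is increasing and concave, as already noted in the proof of Corollary~\ref{cor:general-F}.
\item For the abscissa, set $t=au^\beta$, so that $u^{\beta'}=(t/a)^{\beta'/\beta}$ and $X'=\exp\bigl(a'(t/a)^{\beta'/\beta}\bigr)$. As a function of $X=e^t$, this is $\phi(X)=\exp\bigl(c(\log X)^{\gamma}\bigr)$ with $c=a'a^{-\gamma}$ and $\gamma=\beta'/\beta\in(0,1]$.
\end{itemize}

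The main step is checking that $\phi$ is concave on the relevant range. Write $L=\log X$. Differentiating twice shows that $\phi''\le0$ is equivalent to
\[
 c\gamma L^{\gamma-1}+(\gamma-1)L^{-1}\le1,
\]
which, after multiplying by $L>0$, reads $c\gamma L^\gamma+\gamma-1\le L$. There are two cases.
\begin{itemize}
\item If $\gamma=1$, the condition is $c\le1$, that is $a'\le a$, which holds.
\item If $\gamma<1$, the left side grows like $L^\gamma$, so the inequality holds for large $L$. The point to verify is that it holds on the actual range $L\ge au_*^\beta$. Using $c\gamma L^\gamma=\gamma a'u^{\beta'}$ and $L=au^\beta$, the condition becomes $\gamma a'u^{\beta'}+\gamma-1\le au^\beta$. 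Since $a'\le a$, $u\ge u_*>1$ and $\beta'\le\beta$, we have $a'u^{\beta'}\le au^\beta$. Combined with $\gamma\le1$ and $\gamma-1\le0$, the inequality follows at once.
\end{itemize}

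So $\phi$ is increasing and concave with positive derivative on the coordinate range. Theorem~\ref{thm:nesting} then yields $A_\ell(a',\beta')\subseteq A_k(a,\beta)$. The only delicate point is this last inequality, and it is the reason the joint nesting requires both $a'\le a$ and $\beta'\le\beta$.
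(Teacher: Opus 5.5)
Your proposal is correct and follows the paper's proof. Both arguments reduce the first assertions to Corollary~\ref{cor:general-F} with $s_k=k$, using the same computation of $uF''/F'$, and both obtain \eqref{eq:jointnest} from Theorem~\ref{thm:nesting}. Your direct concavity check of $\phi(X)=\exp\bigl(c(\log X)^{\beta'/\beta}\bigr)$ is the paper's condition $\frac{d}{du}\log(F_1'/F_0')\le0$ multiplied by $u/\beta$. Both therefore come down to the same inequality $(\beta'/\beta)a'u^{\beta'}+\beta'/\beta-1\le au^{\beta}$ for $u\ge u_*>1$.
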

\begin{proof}
These are instances of Corollary~\ref{cor:general-F} with $s_k=k$.
For $F(u)=e^{a u^\beta}$,
\[
 uF''(u)/F'(u)=a\beta u^\beta+\beta-1\longrightarrow\infty;
\]
for $F(u)=e^{a(\log u)^\eta}$, writing
$v=\log u$, it becomes
\[
 uF''(u)/F'(u)=a\eta v^{\eta-1}+\frac{\eta-1}{v}-1
 \longrightarrow\infty.
\]
For the joint inclusion, put $F_0=e^{a u^\beta}$, $F_1=e^{a'u^{\beta'}}$.
The transition $F_1\circ F_0^{-1}$ is increasing and concave,
because
\[
 \frac{d}{du}\log\frac{F_1'(u)}{F_0'(u)}
 =a'\beta'u^{\beta'-1}-a\beta u^{\beta-1}
     +\frac{\beta'-\beta}{u}\le0\qquad(u\ge u_*>1).
\]
Applying Theorem~\ref{thm:nesting} also to this change proves
\eqref{eq:jointnest}.
\end{proof}

For $A_k$ and $Q_k$, we can also determine the asymptotic growth
of the first contacts.

\subsection{The first-contact growth laws}\label{sec:growth}
For either family write $m_k(F)=\min\Cl_D(F(u),-\rho^k)$.
The first contact maximizes the slope of a secant from the fixed
reference point. We use this observation and the classical CA
estimates to determine its asymptotic size.

\begin{lemma}[Reference secants]\label{lem:secant}
For the abscissas in \eqref{eq:Afamily} and \eqref{eq:Qfamily}, set
\begin{equation}\label{eq:secant}
 J_k(n)=\frac{\rho(n)^k-\rho_*^k}{F(u(n))-F(u_*)},\qquad
 \Lambda_k=\max_{n\in D\setminus\{n_*\}}J_k(n).
\end{equation}
Then $\Lambda_k$ is attained, and $m_k(F)$ is the smallest
maximizer of $J_k$.
\end{lemma}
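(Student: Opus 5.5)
We argue in two steps: first that $\Lambda_k$ is attained, then that the smallest maximizer of $J_k$ is exactly the least element of $\Cl_D(F(u),-\rho^k)$. Throughout we write $X(n)=F(u(n))$ and $Y(n)=-\rho(n)^k$. The reference point is $(X_*,Y_*)$ with $X_*=F(u_*)$ and $Y_*=-\rho_*^k$.

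\emph{Attainment.} For $n\in D\setminus\{n_*\}$ we have $n>n_*$, so $\rho(n)>\rho_*$ by superabundance and $X(n)>X_*$ because $F$ is increasing. Hence $J_k(n)>0$. By \eqref{eq:Robinupper}, $\rho(n)^k=O(u^k)$. By the growth $F(u)\gg u^{M}$ for every $M$, established in the proof of Corollary~\ref{cor:general-F}, the denominator dominates, so $J_k(n)\to0$ as $n\to\infty$ in $D$. A positive function on an infinite discrete set that tends to $0$ attains its supremum, and the set of maximizers is finite. So $\Lambda_k$ is attained and the smallest maximizer $n_0$ exists.

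\emph{$n_0$ is a contact.} Take $\lambda=\Lambda_k$ and compare the linear function $Y+\lambda X$ with its value at the reference. We have
\[
 Y(m)+\lambda X(m)-\bigl(Y_*+\lambda X_*\bigr)
 =(X(m)-X_*)\bigl(\Lambda_k-J_k(m)\bigr)\ge0
\]
for every $m\ne n_*$, and the same difference is $0$ at $m=n_*$ and at every maximizer. So $Y+\lambda X$ attains its minimum over $D$ at $n_0$, which means $n_0\in\Cl_D$.

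\emph{Nothing smaller is a contact.} Suppose $n\in\Cl_D$ with $n<n_0$, and let $\lambda\ge0$ be its support parameter. Comparing $n$ with $m=n_*$ gives
\[
 Y(n)-Y_*\le\lambda\bigl(X_*-X(n)\bigr),
\]
that is, $\lambda\ge J_k(n)$. Comparing $n$ with $m=n_0$, using $X(n_0)>X(n)$, gives
\[
 \lambda\le\frac{Y(n)-Y(n_0)}{X(n_0)-X(n)}.
\]
Now write $Y(n)=Y_*-J_k(n)\,(X(n)-X_*)$ and $Y(n_0)=Y_*-\Lambda_k\,(X(n_0)-X_*)$. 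Substituting these into the upper bound shows that the right-hand side is a weighted combination of the secant slopes, with positive weights, and so is strictly less than $\Lambda_k$ when $J_k(n)<\Lambda_k$.

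The step needing care is the correct bound in this sandwich. Rather than handling the weights directly, we use the elementary slope-ordering fact for three points with increasing abscissas: the reference point $P_*$, then $P_n$, then $P_{n_0}$. The slope from $P_*$ to $P_n$ is $-J_k(n)$, and the slope from $P_*$ to $P_{n_0}$ is $-\Lambda_k$. Since $-J_k(n)>-\Lambda_k$, the point $P_n$ lies strictly above the chord from $P_*$ to $P_{n_0}$. A point lying strictly above a chord between two points of $D$ cannot be a lower contact. So no $n<n_0$ lies in $\Cl_D$.

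This sandwich is precisely the chord argument of Proposition~\ref{prop:general-empty}, and it also covers horizontal supports ($\lambda=0$). It follows that $m_k(F)=n_0$, the smallest maximizer of $J_k$.
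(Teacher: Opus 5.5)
Your proof is correct and follows the paper's argument. Attainment comes from positivity and decay of the secants, and the line of slope $-\Lambda_k$ through the reference supports the smallest maximizer $n_0$. Every $n_*<n<n_0$ is then excluded because $P_n$ lies strictly above the chord from $P_*$ to $P_{n_0}$; this is a compact form of the paper's case split ($\lambda\ge\Lambda_k$: the reference beats $n$; $\lambda<\Lambda_k$: $n_0$ beats $n$).

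The intermediate ``sandwich'' paragraph is wrong as written, so delete or correct it:
\begin{itemize}
\item From the displayed inequality $Y(n)-Y_*\le\lambda(X_*-X(n))$, comparison with $n_*$ gives $\lambda\le J_k(n)$, not $\lambda\ge J_k(n)$.
\item Comparison with $n_0$ gives $\lambda\ge s:=(\rho(n_0)^k-\rho(n)^k)/(X(n_0)-X(n))$, not $\lambda\le s$.
\item $\Lambda_k$ is a positive-weight average of $J_k(n)$ and $s$, so $J_k(n)<\Lambda_k$ forces $s>\Lambda_k$, not $s<\Lambda_k$.
\end{itemize}
With these corrections, $s\le\lambda\le J_k(n)<\Lambda_k<s$ is a contradiction, and it is exactly the paper's case split. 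Your version, $J_k(n)\le\lambda\le s<\Lambda_k$, gives no contradiction. The chord argument is what actually carries the proof.
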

\begin{proof}
The secants are positive and tend to zero at infinity, so their
maximum is attained. If $b$ is the smallest maximizer, then the
line of slope $-\Lambda_k$ through the reference supports $b$.
For $n_*<n<b$, a parameter $\lambda\ge\Lambda_k$ makes the
reference strictly better than $n$. If $\lambda<\Lambda_k$,
the point $b$ is strictly better than $n$, since
\begin{align*}
 &(\rho(b)^k-\lambda F(u(b)))-(\rho(n)^k-\lambda F(u(n)))\\
 &\qquad=(\Lambda_k-\lambda)(F(u(b))-F(u(n)))\\
 &\qquad\quad +(\Lambda_k-J_k(n))(F(u(n))-F(u_*))>0.
\end{align*}
Thus no earlier point is a contact.
\end{proof}

\begin{theorem}[Escape rates]\label{thm:rates}
Let $m_k^A=\min A_k(a,\beta)$ and $m_k^Q=\min Q_k(a,\eta)$.
Unconditionally,
\begin{align}
 u(m_k^A)&\sim U_k:=\left(\frac{k}{a\beta}\right)^{1/\beta},
       \label{eq:Arate}\\
 \log u(m_k^Q)&\sim V_k:=
       \left(\frac{k}{a\eta}\right)^{1/(\eta-1)}.
       \label{eq:Qrate}
\end{align}
For the first family, the maximum reference secant satisfies
\begin{equation}\label{eq:Lambda}
 \Lambda_k^{1/k}\sim E\left(\frac{k}{a\beta e}\right)^{1/\beta}.
\end{equation}
If $K_A(N)=\max\{k:m_k^A\le N\}$, then
\begin{equation}\label{eq:inverse}
 K_A(N)\sim a\beta(\log\log N)^\beta.
\end{equation}
\end{theorem}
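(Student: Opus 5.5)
The plan is to reduce everything to a one-variable optimization through Lemma~\ref{lem:secant}. By that lemma, $m_k(F)$ is the smallest maximizer of $J_k$ over $D\setminus\{n_*\}$. So it suffices to locate, asymptotically in $k$, the value $u=u(n)$ at which $\log J_k(n)$ is largest. For the family $A$, write
\[
 \log J_k(n)=k\log\rho(n)-\log\bigl(F(u)-F(u_*)\bigr),\qquad F(u)=e^{au^\beta}.
\]
The model function is $g_k(u)=k\log(Eu)-au^\beta$. It has a unique maximum at $u=U_k=(k/(a\beta))^{1/\beta}$, with value $k\log(EU_k)-k/\beta$. The key quantitative fact is stability of this maximum under rescaling:
\[
 g_k(U_k(1+t))-g_k(U_k)=-k\,c_\beta(t),\qquad c_\beta(t)=\frac{(1+t)^\beta-1}{\beta}-\log(1+t),
\]
and $c_\beta(t)>0$ for $t\ne0$. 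Since the loss is of size $k$ times a positive function of $t$, any error of size $k\cdot o(1)$ in $\log J_k$ cannot move the maximizer outside $U_k(1\pm\delta)$ once $k$ is large.

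\textbf{Lower bound for $\Lambda_k$.} By Lemma~\ref{lem:CA-scale}, there is a CA number $n$ with $u(n)=U_k+o(1)$, and $\rho(n)\ge(1-o(1))Eu(n)$ along CA numbers. The denominator is at most $F(u)$. Hence
\[
 \log\Lambda_k\ge g_k(U_k)-k\,o(1),
\]
where the $o(1)$ also absorbs the shift $U_k+o(1)$ versus $U_k$; this costs only $O(k/U_k)$ in $k\log u$ and $O(U_k^{\beta-1})=o(k)$ in $au^\beta$.

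\textbf{Upper bound.} For every $n\in D$, the unconditional bound \eqref{eq:Robinupper} gives $\log\rho\le\log(Eu)+O(u^{-2})$, uniformly in $n$. For $u$ large, $F(u)-F(u_*)\ge\tfrac12F(u)$. I would split the range of $u$ at a large fixed $u_0$.
\begin{itemize}
\item On $u\le u_0$ (including values of $u$ near $u_*$, where the denominator is small but still bounded below on the finite set of CA numbers that are not $n_*$), $\log J_k\le k\log(Cu_0)+O(1)$. This is $k\log U_k-O(k)$ smaller than the lower bound, so it is negligible because $U_k\to\infty$.
\item On $u\ge u_0$, $\log J_k(n)\le g_k(u)+k\,O(u_0^{-2})+O(1)$.
\end{itemize}
Comparing with the lower bound, a maximizer must satisfy $c_\beta(t)\le o(1)+O(u_0^{-2})$. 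Letting $u_0\to\infty$ slowly gives $t\to0$, that is $u(m_k^A)\sim U_k$, which is \eqref{eq:Arate}. The same two-sided estimate gives $\log\Lambda_k=k\log(EU_k)-k/\beta+o(k)$. Taking $k$-th roots yields \eqref{eq:Lambda}, since $U_k e^{-1/\beta}=(k/(a\beta e))^{1/\beta}$.

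\textbf{Family $Q$.} Work in $v=\log u$. The model is
\[
 h_k(v)=k\log E+kv-av^\eta,
\]
maximized at $V_k=(k/(a\eta))^{1/(\eta-1)}$. Its deviation is $-kV_k\bigl[((1+t)^\eta-1)/\eta-t\bigr]$. This is even larger than $k$, so the same split-range argument applies verbatim and gives \eqref{eq:Qrate}.

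\textbf{Inverse count.} The sequence $m_k^A$ is nondecreasing (Proposition~\ref{prop:minima}) and $u(m_k^A)\sim U_k$, so $K_A(N)$ is the largest $k$ with $u(m_k^A)\le\log\log N$. Since $U_k$ is a regularly varying, increasing function of $k$, asymptotic inversion of $u\sim(k/(a\beta))^{1/\beta}$ gives $k\sim a\beta u^\beta$, i.e.\ \eqref{eq:inverse}. The sandwich is obtained from $U_{k(1\pm\delta)}$.

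\textbf{Main obstacle.} I expect the main difficulty to be uniformity. The lemma's estimate $\rho\sim Eu$ holds only asymptotically, while it is raised to the $k$-th power. The remedy is the asymmetric use above: the uniform explicit bound \eqref{eq:Robinupper} for the upper side, the asymptotic estimate only at a single witness for the lower side, and the strict gap $k\,c_\beta(t)$ to dominate all $k\cdot o(1)$ errors.
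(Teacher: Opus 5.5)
Your proposal is correct and follows essentially the paper's route. You use Lemma~\ref{lem:secant} to identify the first contact as a secant maximizer, compare it with a CA witness at $u\approx U_k$ (resp.\ $\log u\approx V_k$), and conclude from the unique maximum of the rescaled profile $\log t-t^\beta/\beta$ (resp.\ $\eta t-t^\eta$). The only difference is on the maximizer's side: you control it with the uniform bound \eqref{eq:Robinupper} and a split at $u_0$, whereas the paper first invokes $m_k\to\infty$ (from the empty-intersection corollary) so that $\rho\sim Eu$ applies directly at $m_k$, which makes your version slightly more self-contained.
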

\begin{proof}
The minima tend to infinity by Corollary~\ref{thm:arith-structure}
and Proposition~\ref{prop:minima}. For any sequence of CA numbers
with $u=u(n)\to\infty$, Lemma~\ref{lem:CA-scale} gives, for
$F=e^f$ and $k\to\infty$,
\begin{equation}\label{eq:logsecant}
 \frac1k\log J_k(n)=\log E+\log u-\frac{f(u)}k+o(1).
\end{equation}
The reference terms vanish in both numerator and denominator;
also $\log(\rho(n)/(Eu))=o(1)$.

For $f(u)=a u^\beta$, choose a CA competitor with
$u=U_k+o(1)$ using Lemma~\ref{lem:CA-scale}. Its logarithmic
secant divided by $k$ is
$\log(EU_k)-1/\beta+o(1)$. Comparing the maximizing first
contact with this competitor and putting $t_k=u(m_k^A)/U_k$
gives
\[
 \log t_k-\frac{t_k^\beta}{\beta}\ge-\frac1\beta+o(1).
\]
The expression on the left has a unique maximum at $t=1$ and
tends to $-\infty$ at both ends of $(0,\infty)$. Hence
$t_k\to1$, proving \eqref{eq:Arate}; substitution in
\eqref{eq:logsecant} proves \eqref{eq:Lambda}. Monotonicity of
$m_k^A$ and inversion of \eqref{eq:Arate} give \eqref{eq:inverse}.

For $f(u)=a(\log u)^\eta$, choose a CA competitor with
$u=e^{V_k}+o(1)$. Write $v_k=\log u(m_k^Q)$ and
$t_k=v_k/V_k$. Comparison of logarithmic secants gives
\[
 k v_k-a v_k^\eta
 \ge kV_k-aV_k^\eta+o(k).
\]
Since $k=a\eta V_k^{\eta-1}$, division by $aV_k^\eta$ yields
\[
 \eta t_k-t_k^\eta\ge\eta-1+o(1).
\]
Again the function has its unique maximum at $1$, which proves
\eqref{eq:Qrate}.
\end{proof}

For example, $F=e^{\sqrt u}$ gives $u(m_k)\sim4k^2$,
$F=e^u$ gives $u(m_k)\sim k$, and $F=e^{u^2}$ gives
$u(m_k)\sim\sqrt{k/2}$. The logarithmic choice
$F=e^{(\log u)^2}$ gives $\log u(m_k)\sim k/2$.
Thus the least integers themselves grow on double-exponential
or triple-exponential scales, depending on the abscissa.

\subsection{Divisibility and prime-exponent restrictions}\label{sec:arithmetic-restrictions}

Combining the growth laws with classical CA estimates gives the
following factorization properties.

\begin{corollary}\label{cor:div-uniform}
For every fixed positive integer $d$, all members of all sufficiently
high levels of either arithmetic family are divisible by $d$.
Moreover, for each fixed prime $p$,
\begin{align*}
 \log P(m_k^A)&\sim U_k,&
 v_p(m_k^A)&\sim\frac{U_k}{\log p},\\
 \log\log P(m_k^Q)&\sim V_k,&
 \log v_p(m_k^Q)&\sim V_k.
\end{align*}
\end{corollary}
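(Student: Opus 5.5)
The plan has two parts. The divisibility assertion comes directly from Corollary~\ref{cor:general-F}, and the asymptotics come from combining Theorem~\ref{thm:rates} with Lemma~\ref{lem:CA-scale}.

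For the divisibility statement, observe that in the proof of Corollary~\ref{thm:arith-structure} both $A_k(a,\beta)$ and $Q_k(a,\eta)$ were exhibited as instances of Corollary~\ref{cor:general-F} with $s_k=k$. The final assertion of that corollary then applies verbatim. For completeness we recall the mechanism. Write $d=\prod p^{e_p}$. By \eqref{eq:fixedprime}, for each of the finitely many primes $p\mid d$ there is a bound $B_p$ such that every CA number $n>B_p$ has $v_p(n)\ge e_p$. Every member of level $k$ is at least $m_k$, and $m_k\to\infty$ by Theorem~\ref{thm:rates} (or Proposition~\ref{prop:minima}). So once $m_k>\max_p B_p$, every member of the level is divisible by $d$.

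For the asymptotics, we only need that $m_k^A$ and $m_k^Q$ are CA numbers tending to infinity, so Lemma~\ref{lem:CA-scale} applies along these sequences. From $P(n)\sim\log n$ we get $\log P(n)=\log\log n+o(1)=u(n)+o(1)$. Combined with \eqref{eq:Arate}, this gives $\log P(m_k^A)\sim U_k$, since $U_k\to\infty$ absorbs the $o(1)$ term. Taking one more logarithm, $\log\log P(n)=\log(u(n)+o(1))=\log u(n)+o(1)$, and \eqref{eq:Qrate} gives $\log\log P(m_k^Q)\sim V_k$. For a fixed prime $p$, \eqref{eq:fixedprime} gives $v_p(n)\log p=u(n)+\log u(n)+O_p(1)=u(n)(1+o(1))$. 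Hence $v_p(m_k^A)\sim u(m_k^A)/\log p\sim U_k/\log p$. For the $Q$ family we take logarithms:
\[
\log v_p(m_k^Q)=\log u(m_k^Q)+o(1)-\log\log p+o(1)\sim V_k,
\]
because $V_k\to\infty$ (recall $\eta>1$) and $\log\log p$ is fixed.

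No step is a real obstacle. The only point needing care is to confirm that the error terms in Lemma~\ref{lem:CA-scale} are uniform enough along the particular sequence $m_k$. They are, because those statements are limits along arbitrary CA sequences tending to infinity, and $m_k$ is such a sequence. The additive constants disappear because $U_k,V_k\to\infty$.
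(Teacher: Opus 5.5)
Your proposal is correct and follows essentially the same route as the paper: divisibility comes from \eqref{eq:fixedprime}, which forces every fixed prime exponent to infinity along CA numbers, combined with $m_k\to\infty$; the asymptotics come from substituting Theorem~\ref{thm:rates} into \eqref{eq:CAscale} and \eqref{eq:fixedprime}. Your explicit handling of the $o(1)$ and $O_p(1)$ terms, which are absorbed because $U_k,V_k\to\infty$, simply spells out steps the paper leaves implicit.
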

\begin{proof}
Equation~\eqref{eq:fixedprime} shows that $v_p(n)\to\infty$
along all CA numbers. Since every member of level $k$ is at
least $m_k\to\infty$, this holds uniformly over that level.
Apply it to the finitely many prime divisors of $d$. The stated
asymptotics follow from \eqref{eq:CAscale}, \eqref{eq:fixedprime},
and Theorem~\ref{thm:rates}.
\end{proof}

When the abscissa is concave as a function of $\log n$, the
supporting-line condition gives explicit inequalities for every
prime exponent. These inequalities become stricter as the order
increases.

\begin{theorem}[Prime-exponent inequalities]\label{thm:windows}
Let $F$ be an abscissa of one of the arithmetic families, and
suppose $f(x)=F(\log x)$ is increasing and concave for
$x\ge\log n_*$. If $n\in\Cl_D(F(u),-\rho^k)$, there is
$\varepsilon>0$ such that, for every prime $p$, with $b=v_p(n)$,
\begin{equation}\label{eq:windowlower}
 L_k(p,b):=\frac{(1+1/S_{b+1}(p))^k-1}{k\log p}
 \le\varepsilon,
\end{equation}
and, for $p\mid n$,
\begin{equation}\label{eq:windowupper}
 \varepsilon\le
 U_k(p,b):=\frac{1-(1+1/S_b(p))^{-k}}{k\log p}.
\end{equation}
The number $n$ is the unique classical CA maximizer at this
$\varepsilon$. For each fixed $(p,b)$, $L_k(p,b)$ increases
strictly and $U_k(p,b)$ decreases strictly as $k$ increases.
\end{theorem}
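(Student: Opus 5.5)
The plan is to reduce everything to a single classical parameter by linearizing the level-$k$ support at $n$, exactly as in the proof of Theorem~\ref{thm:nesting}. We then test the support against the two neighbouring integers $np$ and $n/p$. Put $x(m)=\log m$ and $y(m)=-\log\rho(m)$. Level $k$ uses the coordinates $X=f(x)$ and $Y=-\rho^k=\psi(y)$ with $\psi(y)=-e^{-ky}$. Both $f$ and $\psi$ are increasing and concave on the relevant ranges.

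\textbf{Step 1: extend the support from $D$ to all $m\ge n_*$.} Let $\lambda\ge0$ be a support parameter for $n$ at level $k$. Then $g(x,y)=\psi(y)+\lambda f(x)$ is concave and nondecreasing in $y$, and $g$ is minimized over the points of $D$ at $n$.

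Classically, the CA numbers are exactly the vertices of the lower convex envelope of all points $(x(m),y(m))$. Since $n_*$ is such a vertex, every point with $m\ge n_*$ lies on or above the piecewise-linear envelope through the points of $D$ on $[\log n_*,\infty)$. Moving a point vertically down to this envelope does not increase $g$. On each edge of the envelope, the concave function $g$ attains its minimum at an endpoint, and the endpoints lie in $D$. Hence $g(x(m),y(m))\ge g(x(n),y(n))$ for every integer $m\ge n_*$.

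\textbf{Step 2: derive the two inequalities.} Set
\[
 \varepsilon=\frac{\lambda f'(\log n)}{k\rho(n)^k}.
\]
Take $m=np$ and use $\rho(np)=\rho(n)(1+1/S_{b+1}(p))$. Step~1 gives
\[
 \rho(n)^k\bigl[(1+1/S_{b+1}(p))^k-1\bigr]\le\lambda\bigl[f(\log n+\log p)-f(\log n)\bigr].
\]
Concavity of $f$ bounds the right side by $\lambda f'(\log n)\log p$. Dividing by $k\rho(n)^k\log p$ yields \eqref{eq:windowlower}.

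For $p\mid n$, take $m=n/p$. Then $\rho(n/p)=\rho(n)(1+1/S_b(p))^{-1}$, and concavity gives $f(\log n)-f(\log n-\log p)\ge f'(\log n)\log p$. The same division yields \eqref{eq:windowupper}. This needs $n/p\ge n_*$. Since $n\in D\setminus\{n_*\}$ and $n_*$ is the immediately preceding CA number of $D$, I expect this to follow from $n/p\ge n_*$ for the CA successor structure, or from a direct check in the finitely many small cases. This point must be verified.

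For positivity of $\varepsilon$, note that $\lambda=0$ is impossible: $\rho^k$ is unbounded on $D$, so a horizontal support cannot exist. Hence $\lambda>0$ and $\varepsilon>0$.

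\textbf{Step 3: uniqueness and monotonicity in $k$.} The elementary inequalities $(1+t)^k-1\ge k\log(1+t)$ and $1-(1+t)^{-k}\le k\log(1+t)$ give
\[
 L_k(p,b)\ge F(p,b+1),\qquad U_k(p,b)\le F(p,b),
\]
where $F(p,j)$ is the classical critical value. Hence $F(p,v_p(n)+1)\le\varepsilon\le F(p,v_p(n))$ for every prime $p$. By the increment rule, $n$ is therefore a classical maximizer at $\varepsilon$.

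For $k\ge2$ both inequalities are strict, so $\varepsilon$ is noncritical for every prime, and the maximizer is unique. For $k=1$, uniqueness requires separate handling of ties. Here I would use the strictness in Step~1 coming from the increment comparison, or otherwise check whether the statement intends $k\ge2$.

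Finally, put $h(k)=((1+t)^k-1)/k$ and write $s=\log(1+t)$. Then $h$ equals $(e^{ks}-1)/k$, which is strictly increasing in $k$ by convexity of $e^{ks}$. Likewise $(1-e^{-ks})/k$ is strictly decreasing in $k$. These give the monotonicity of $L_k$ and $U_k$.

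\textbf{Main obstacle.} The main obstacle is Step~1: justifying rigorously that supporting against $D$ alone controls the non-CA neighbours $np$ and $n/p$. This requires the hull-vertex characterization of CA numbers, the fact that all relevant neighbours satisfy $m\ge n_*$, and the concave-minimum-at-vertices argument along each edge and along the vertical rays.
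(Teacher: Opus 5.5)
Your route is the paper's route. You extend the level-$k$ support from $D$ to all integers $m\ge n_*$ via the classical lower hull and the concavity of $f$ and $y\mapsto-e^{-ky}$. You then compare with $np$ and $n/p$ using the tangent bound for $f$, and sandwich $\varepsilon$ between classical critical values. Your Step~1 is a correct non-strict version of the paper's hull argument, and non-strictness is all the two inequalities need. One small correction: under the all-ties convention the CA numbers are all contacts of the envelope, not just its vertices. The argument is unaffected, since it only uses that every point with $m\ge n_*$ lies on or above the polygon through consecutive elements of $D$.

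You leave two points open, and both close easily.

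\emph{The range condition $n/p\ge n_*$.} The paper checks this directly, and your ``immediately preceding CA number'' heuristic is not what makes it work. The primes dividing $n\in D$ form an initial segment. If $P(n)\ge19$, then $n/p\ge n/P(n)\ge2\cdot3\cdot5\cdot7\cdot11\cdot13\cdot17=510510>n_*$. If $P(n)\le17$, then $n/p\ge n/17\ge n_*$ unless $n<17n_*=942480$. The only element of $D\setminus\{n_*\}$ below that bound is $720720=2^4\cdot3^2\cdot5\cdot7\cdot11\cdot13$, for which $720720/p\ge720720/13=n_*$. This check is needed both for the hull extension and for applying concavity of $f$ on $[\log(n/p),\log n]$.

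\emph{The case $k=1$.} Your worry here rests on a false belief. Put $\tau_j=\log(1+1/S_j(p))>0$. Then for every $k\ge1$,
\[
 k\log p\,L_k(p,b)=e^{k\tau_{b+1}}-1>k\tau_{b+1},\qquad
 k\log p\,U_k(p,b)=1-e^{-k\tau_b}<k\tau_b .
\]
This includes $k=1$, where the inequalities read $e^{\tau}-1>\tau$ and $1-e^{-\tau}<\tau$. Hence $F(p,b+1)<\varepsilon<F(p,b)$ for $p\mid n$, and $F(p,1)<\varepsilon$ for $p\nmid n$, strictly at every level. So $\varepsilon$ is never a critical value, and uniqueness of the classical maximizer follows uniformly in $k$. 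No separate tie analysis and no restriction to $k\ge2$ is needed. You were probably thinking of Bernoulli's inequality $(1+t)^k\ge1+kt$, which is an equality at $k=1$. The comparison here, however, is with $k\log(1+t)$, not with $kt$.
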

\begin{proof}
First, contacts computed over $D$ are also maximizers of
$\rho(n)^k-\lambda F(u(n))$ over all integers $n\ge n_*$.
To see this, use the classical lower hull in the coordinates
$x=\log n$, $y=-\log\rho(n)$. Every non-CA point lies above
or on a chord between consecutive CA contacts. The maps
$x\mapsto f(x)$ and $y\mapsto-e^{-ky}$ are increasing and
concave, with the latter strictly concave. The transformed point
cannot lie below the corresponding transformed chord: equivalently,
for every $\lambda\ge0$, its value of
$-e^{-ky}+\lambda f(x)$ is greater than the convex combination
of the endpoint values. The ordinate endpoints are distinct
because CA numbers are superabundant. Thus a non-CA integer
cannot improve on the CA maxima. The tail tends to $-\infty$
for the maximized objective when $\lambda>0$, and such a
parameter is necessary since $\rho$ is unbounded.

Fix a supporting $\lambda>0$, put $x=\log n$, and set
$\varepsilon=\lambda f'(x)/(k\rho(n)^k)$. Comparison with $np$
and concavity of $f$ give
\[
 \rho(np)^k-\rho(n)^k
 \le\lambda[f(x+\log p)-f(x)]
 \le\lambda f'(x)\log p,
\]
which is \eqref{eq:windowlower}. Comparison with $n/p$ similarly
gives \eqref{eq:windowupper}. These removals stay in the range
$n/p\ge n_*$: if $P(n)\ge19$, then
$n/P(n)\ge2\cdot3\cdot5\cdot7\cdot11\cdot13\cdot17>n_*$;
if $P(n)\le17$, only $n<17n_*$ needs checking, and the sole CA
number in $(n_*,17n_*)$ is $720720$, with $720720/13=n_*$.

The strict elementary inequalities $e^t>1+t$ and $1-e^{-t}<t$
show that
\[
 F(p,b+1)<L_k(p,b)\le\varepsilon,
 \qquad \varepsilon\le U_k(p,b)<F(p,b)\quad(p\mid n).
\]
Thus every prime exponent is the unique optimum in the classical
increment rule. Finally $(e^{kt}-1)/k$ increases strictly with
$k$, whereas $(1-e^{-kt})/k$ decreases strictly for $t>0$.
\end{proof}

\begin{corollary}[Effective divisibility and divisibility chains]
\label{cor:effective}
Under the hypotheses of Theorem~\ref{thm:windows}, let
$d=\prod_{p\mid d}p^{b_p}>1$ and put
\begin{equation}\label{eq:kd}
 k(d)=\max_{p\mid d}
 \left\lceil
 \frac{\log(1+\log p/\log2)}{\log(1+1/S_{b_p}(p))}
 \right\rceil.
\end{equation}
Every member of every level $k\ge k(d)$ is divisible by $d$.
Any two contacts $n<m$ selected at any levels of this family
satisfy $n\mid m$. In particular, each fixed level is a
divisibility chain, and the first contacts satisfy
$m_k(F)\mid m_{k+1}(F)$.
\end{corollary}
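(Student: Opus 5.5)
The plan is to derive both assertions from Theorem~\ref{thm:windows}, which says two things about every contact $n$ at level $k$. First, $n$ is the \emph{unique} classical CA maximizer at some parameter $\varepsilon>0$. Second, every prime exponent of $n$ is pinned by the window inequalities \eqref{eq:windowlower} and \eqref{eq:windowupper}.

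\emph{Divisibility by $d$.} Fix a member $n$ of level $k\ge k(d)$ and let $\varepsilon$ be as in Theorem~\ref{thm:windows}.
\begin{enumerate}
\item Get an upper bound for $\varepsilon$ from the prime $2$, which divides every member of $D$. With $b_2=v_2(n)\ge1$, \eqref{eq:windowupper} gives
\[
 \varepsilon\le U_k(2,b_2)=\frac{1-(1+1/S_{b_2}(2))^{-k}}{k\log2}<\frac1{k\log2}.
\]
The last inequality is strict because $(1+1/S_{b_2}(2))^{-k}>0$.
\item Argue by contradiction. Suppose $p\mid d$ and $b=v_p(n)<b_p$, so $b+1\le b_p$. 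Then $S_{b+1}(p)\le S_{b_p}(p)$, and \eqref{eq:windowlower} yields
\[
 \frac{(1+1/S_{b_p}(p))^k-1}{k\log p}\le L_k(p,b)\le\varepsilon<\frac1{k\log2}.
\]
\item Rearranging gives $(1+1/S_{b_p}(p))^k<1+\log p/\log2$. Hence $k$ is strictly less than the ratio inside the ceiling in \eqref{eq:kd}, and so $k<k(d)$, a contradiction.
\end{enumerate}
The strictness from the first step is exactly what makes the ceiling in \eqref{eq:kd} sufficient, with no off-by-one issue.

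\emph{Divisibility chains.} Let $n<m$ be contacts, possibly at different levels $k$ and $k'$. By Theorem~\ref{thm:windows}, $n$ is the unique classical maximizer at some $\varepsilon_n$, and $m$ is the unique classical maximizer at some $\varepsilon_m$.
\begin{enumerate}
\item Use the increment rule from the introduction. Since $F(p,j)$ decreases strictly in $j$, the set of increments with $F(p,j)\ge\varepsilon$ (or $>\varepsilon$) shrinks as $\varepsilon$ grows. So the unique maximizer has $v_p$ nonincreasing in $\varepsilon$, for every prime $p$.
\item Uniqueness means no tie convention is needed: the exponent at $\varepsilon$ is the number of $j$ with $F(p,j)>\varepsilon$, and there is no $j$ with equality.
\item If $\varepsilon_m\ge\varepsilon_n$, then $v_p(m)\le v_p(n)$ for every $p$. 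Hence $m\mid n$ and $m\le n$, contradicting $n<m$.
\item Therefore $\varepsilon_m<\varepsilon_n$. This gives $v_p(n)\le v_p(m)$ for all $p$, that is, $n\mid m$.
\end{enumerate}

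Each fixed level is then a chain. Also $m_k<m_{k+1}$ or $m_k=m_{k+1}$, with $m_k\le m_{k+1}$ by Proposition~\ref{prop:minima}. In either case $m_k\mid m_{k+1}$, using that both are finite because every level is infinite (Corollary~\ref{thm:arith-structure}).

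The main point needing care is the monotonicity step: the exponent of the maximizer must be nonincreasing in $\varepsilon$ even though ties at critical parameters are allowed in general. I would handle this entirely through the uniqueness clause of Theorem~\ref{thm:windows}, which already excludes ties at the $\varepsilon$ attached to each contact.
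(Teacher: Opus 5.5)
Your proposal is correct and follows essentially the same route as the paper. You bound $\varepsilon<1/(k\log 2)$ from \eqref{eq:windowupper} at $p=2$, then use \eqref{eq:windowlower} with $S_{v_p(n)+1}(p)\le S_{b_p}(p)$ to contradict $k\ge k(d)$, and finally combine uniqueness of the classical maximizer with the prime-increment rule to get $n\mid m$.

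The only difference is cosmetic. You obtain $\varepsilon_m<\varepsilon_n$ by contradiction from the monotonicity of the exponents in $\varepsilon$, whereas the paper compares the two maximality inequalities in \eqref{eq:classical}; both arguments work. Also, your $b_2=v_2(n)$ clashes with the exponent $b_2$ of $d$, so rename it.
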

\begin{proof}
Every member is even, and \eqref{eq:windowupper} at $p=2$ gives
$\varepsilon<1/(k\log2)$. If $v_p(n)<b_p$, then
\eqref{eq:windowlower} implies
\[
 (1+1/S_{b_p}(p))^k-1<\frac{\log p}{\log2},
\]
contradicting $k\ge k(d)$. Every selected contact is a unique
classical maximizer by Theorem~\ref{thm:windows}.
If two such maximizers $n<m$ have parameters
$\varepsilon_n,\varepsilon_m$, comparison in
\eqref{eq:classical} gives $\varepsilon_n>\varepsilon_m$.
The prime-increment description then gives $n\mid m$.
Apply this to the nondecreasing first-contact sequence.
\end{proof}

For $F=e^{a u^\beta}$ a sufficient concavity range is
\begin{equation}\label{eq:concaverange}
 0<\beta\le1,\qquad a\beta u_*^{\beta-1}\le1.
\end{equation}
Indeed, the sign of $f''$ is the sign of
$a\beta u^{\beta-1}+(\beta-1)/u-1$. This includes
$F=e^{\sqrt u}$, $e^{2\sqrt u}$, and $e^{\alpha u}$ for
$0<\alpha\le1$. It also includes a logarithmic example outside
this parametrization: for $F=e^{(\log u)^2}$ the sign condition is
\[
 \frac{2\log u-1+1/\log u}{u}\le1.
\]
For $u\ge u_*>2$, its left side is at most
$2/e+(1/\log2-1)/2<1$. No concavity assumption is needed for
Corollary~\ref{thm:arith-structure} or Theorem~\ref{thm:rates}.

For $n\in D\setminus\{n_*\}$, define the endpoints of its
classical parameter interval by
\begin{align}
 \varepsilon_-(n)&=\max_{q\ \mathrm{prime}}F(q,v_q(n)+1),&
 \varepsilon_+(n)&=\min_{p\mid n}F(p,v_p(n)),
 \label{eq:parameter-endpoints}\\
 \Delta(n)&=\varepsilon_+(n)-\varepsilon_-(n).&&\notag
\end{align}
The maximum is attained: outside the finite prime support of $n$,
the terms are $F(q,1)$ and tend to zero. The prime-increment rule
says precisely that $n$ is a classical maximizer for
$\varepsilon\in[\varepsilon_-(n),\varepsilon_+(n)]$.

\begin{proposition}[A necessary parameter gap]\label{prop:parameter-gap}
Under the hypotheses of Theorem~\ref{thm:windows}, suppose
$n\in\Cl_D(F(u),-\rho^k)$ and let $q$ attain the maximum in
\eqref{eq:parameter-endpoints}. Then
\begin{equation}\label{eq:parameter-gap}
 \Delta(n)>\frac{k}{2}\varepsilon_-(n)^2\log q,
 \qquad
 k<\frac{2\Delta(n)}{\varepsilon_-(n)^2\log q}.
\end{equation}
\end{proposition}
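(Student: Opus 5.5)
The plan is to read the gap bound off directly from the two one-sided inequalities of Theorem~\ref{thm:windows}, using a second-order Taylor bound for the exponential. Fix $n\in\Cl_D(F(u),-\rho^k)$ and let $\varepsilon>0$ be the parameter supplied by Theorem~\ref{thm:windows}. Let $q$ attain the maximum in \eqref{eq:parameter-endpoints} and put $b=v_q(n)$. We allow $b=0$ when $q\nmid n$: inequality \eqref{eq:windowlower} holds for every prime, so this case needs no separate treatment. Set
\[
 t=\log\bigl(1+1/S_{b+1}(q)\bigr)>0,
\]
so that $\varepsilon_-(n)=F(q,b+1)=t/\log q$.

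\textbf{Lower side.} From \eqref{eq:windowlower} at the prime $q$ we have
\[
 \varepsilon\ge L_k(q,b)=\frac{e^{kt}-1}{k\log q}.
\]
For $x>0$ we use the strict inequality $e^x-1>x+x^2/2$. Applied with $x=kt$, it gives
\[
 L_k(q,b)>\frac{kt+k^2t^2/2}{k\log q}
 =\varepsilon_-(n)+\frac{k}{2}\,\frac{t^2}{\log q}
 =\varepsilon_-(n)+\frac{k}{2}\,\varepsilon_-(n)^2\log q,
\]
where the last step substitutes $t=\varepsilon_-(n)\log q$.

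\textbf{Upper side.} Let $p\mid n$ attain the minimum defining $\varepsilon_+(n)$. The proof of Theorem~\ref{thm:windows} already records that $\varepsilon\le U_k(p,v_p(n))<F(p,v_p(n))=\varepsilon_+(n)$. Hence
\[
 \varepsilon_+(n)>\varepsilon>\varepsilon_-(n)+\frac{k}{2}\varepsilon_-(n)^2\log q.
\]
Subtracting $\varepsilon_-(n)$ gives the first inequality in \eqref{eq:parameter-gap}. Since $\varepsilon_-(n)>0$ and $\log q>0$, we may divide by $\varepsilon_-(n)^2\log q/2$, which gives the bound on $k$.

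\textbf{Main obstacle.} There is no real difficulty here. The only points needing care are that \eqref{eq:windowlower} is available at the maximizing prime $q$ even when $q\nmid n$ (it is stated for every prime), and that we need strictness in the Taylor bound rather than in the already-strict upper comparison.
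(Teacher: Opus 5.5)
Your proposal is correct and matches the paper's proof: it applies the two one-sided inequalities of Theorem~\ref{thm:windows}, at the maximizing prime $q$ and at a prime $p\mid n$ attaining $\varepsilon_+(n)$. It then uses $e^x>1+x+x^2/2$ on the lower side to get $\varepsilon_+(n)>\varepsilon_-(n)+\frac{k}{2}\varepsilon_-(n)^2\log q$.
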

\begin{proof}
Choose $p\mid n$ attaining $\varepsilon_+(n)$. Writing
$a=\varepsilon_-(n)$ and $b=\varepsilon_+(n)$, the two
inequalities of Theorem~\ref{thm:windows} give
\[
 \frac{e^{ka\log q}-1}{k\log q}
 \le\varepsilon\le
 \frac{1-e^{-kb\log p}}{k\log p}<b.
\]
Since $e^t>1+t+t^2/2$ for $t>0$, the left side exceeds
$a+(k/2)a^2\log q$. Subtracting $a$ proves the claim.
\end{proof}

The width of the classical parameter interval therefore gives an
upper bound on the orders at which a fixed number can remain a
contact. This bound applies to the selected numbers; it gives no
separation estimate for all classical critical values.

\subsection{Prime factors in first-contact quotients}\label{sec:prime-quotients}

The classical result of Erd\H{o}s and Nicolas
\cite[Proposition 4, pp.~70--71]{EN} concerns the successive
unique maximizers $N_i$ as $\varepsilon$ passes through its
critical values. It gives $N_{i+1}/N_i=p$ or $pq$, with distinct
primes in the latter case. A double critical value produces four
maximizers $N_i,pN_i,qN_i,pqN_i$. Under our all-ties convention,
the middle quotient in their increasing order is $q/p$ when $p<q$.
The classical prime-successor conjecture is equivalent to
\begin{equation}\label{eq:prime-successor-conjecture}
 F(p,a)\ne F(q,b)\qquad(p\ne q,\ a,b\ge1).
\end{equation}
Our contact sets can skip many classical transitions. Although
Corollary~\ref{cor:effective} gives divisibility, their quotients
need not satisfy the classical two-prime bound.

Let $\omega(r)$ and $\Omega(r)$ count prime factors without and
with multiplicity, respectively, with $\omega(1)=\Omega(1)=0$.

\begin{theorem}[Prime factors in successive first-contact quotients]
\label{thm:prime-quotients}
Fix $0<\alpha\le1$ and put
$m_k=\min A_k(\alpha,1)$ and $R_k=m_{k+1}/m_k$.
The quotients $R_k$ are positive integers and, unconditionally,
\begin{equation}\label{eq:prime-quotient-rate}
 \limsup_{k\to\infty}\frac{\log(1+\omega(R_k))}{k}
 =\limsup_{k\to\infty}\frac{\log(1+\Omega(R_k))}{k}
 =\frac1\alpha.
\end{equation}
In particular, the numbers of distinct prime factors are unbounded.
\end{theorem}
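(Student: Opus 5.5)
The plan is to combine three earlier results: the divisibility chain from Corollary~\ref{cor:effective}, the growth law \eqref{eq:Arate}, and the CA estimate $P(n)\sim\log n$ from Lemma~\ref{lem:CA-scale}. I would prove the upper and lower bounds for the $\limsup$ separately.

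\emph{Integrality.} For $F=e^{\alpha u}$ we have $a=\alpha$ and $\beta=1$. The concavity range \eqref{eq:concaverange} reduces to $\alpha\le1$, so the hypotheses of Theorem~\ref{thm:windows} hold. Corollary~\ref{cor:effective} then gives $m_k\mid m_{k+1}$, so each $R_k$ is a positive integer. Theorem~\ref{thm:rates} with $\beta=1$ gives $u(m_k)=\log\log m_k=k/\alpha+o(k)$. Hence
\[
 \log m_k=e^{k/\alpha+o(k)}.
\]

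\emph{Upper bound.} Since $R_k\mid m_{k+1}$, we have
\[
 \omega(R_k)\le\Omega(R_k)\le\Omega(m_{k+1})\le\frac{\log m_{k+1}}{\log2}.
\]
Taking logarithms gives $\log(1+\Omega(R_k))\le u(m_{k+1})+O(1)=(k+1)/\alpha+o(k)$. Dividing by $k$ shows that both $\limsup$s are at most $1/\alpha$.

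\emph{Lower bound.} Every prime dividing $m_k$ divides either $m_1$ or some $R_j$ with $j<k$. Therefore
\[
 \omega(m_k)\le\omega(m_1)+\sum_{j<k}\omega(R_j).
\]
By Lemma~\ref{lem:CA-scale}, the prime factors of a CA number form an initial segment of the primes and $P(m_k)\sim\log m_k$. So $\omega(m_k)=\pi(P(m_k))\sim\log m_k/\log\log m_k$, which is $e^{k/\alpha+o(k)}$.

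Now suppose, for contradiction, that $\limsup\log(1+\omega(R_j))/j\le1/\alpha-\delta$ for some $\delta>0$. Then $\omega(R_j)\le C_\delta e^{j(1/\alpha-\delta/2)}$ for all $j$. The geometric sum then gives $\omega(m_k)\le C'e^{k(1/\alpha-\delta/2)}$, which contradicts $\omega(m_k)=e^{k/\alpha+o(k)}$. This proves the lower bound for $\omega$, and the bound for $\Omega$ follows from $\Omega\ge\omega$. Unboundedness of $\omega(R_k)$ is then immediate.

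\emph{Main obstacle.} The delicate point is that Theorem~\ref{thm:rates} controls $u(m_k)$ only to within $o(k)$. This error enters an exponent, so it cannot give pointwise information about individual quotients; that is why the conclusion is stated as a $\limsup$ and not as a limit. The argument works because the error $o(k)$ is dominated by any margin $\delta k$ in the summation step. The only checks needed are that $\log\log m_k$ and $\log\omega(m_k)$ differ by $O(\log k)$, which follows from $\omega(m_k)\sim\log m_k/\log\log m_k$, and that the reference quantity $\omega(m_1)$ is finite.
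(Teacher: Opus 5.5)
Your proposal is correct and follows the paper's proof essentially step for step: integrality comes from Corollary~\ref{cor:effective} via the concavity of $x\mapsto x^\alpha$, the upper bound from $\Omega(R_k)\le\log m_{k+1}/\log 2$ together with $u(m_k)\sim k/\alpha$, and the lower bound by contradiction, summing the $\omega(R_j)$ against $\omega(m_k)=\pi(P(m_k))=e^{k/\alpha+o(k)}$. The only difference is cosmetic: you start the telescoping sum at $m_1$ and absorb the small indices into a constant, whereas the paper starts it at a large index $K$.
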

\begin{proof}
Integrality follows from Corollary~\ref{cor:effective}, because
$x\mapsto x^\alpha$ is concave. Theorem~\ref{thm:rates} gives
$u_k:=u(m_k)\sim k/\alpha$. By Lemma~\ref{lem:CA-scale},
$P(m_k)\sim\log m_k$. Every prime up to $P(m_k)$ divides $m_k$,
so the prime number theorem gives
\[
 r_k:=\omega(m_k)=\pi(P(m_k)),\qquad
 \frac{\log r_k}{k}\longrightarrow\frac1\alpha.
\]
Moreover,
\[
 0\le r_{k+1}-r_k\le\omega(R_k)\le\Omega(R_k)
 \le\frac{\log m_{k+1}}{\log2}.
\]
Taking logarithms gives an upper bound $1/\alpha$ for both
limsups in \eqref{eq:prime-quotient-rate}. If the first limsup
were smaller, choose $c$ between it and $1/\alpha$, with $c>0$.
For all sufficiently large $j$, $\omega(R_j)\le e^{cj}$, whence
\[
 r_k\le r_K+\sum_{j=K}^{k-1}\omega(R_j)=O(e^{ck}).
\]
This contradicts the limit for $\log r_k/k$ and proves both
equalities.
\end{proof}

The theorem concerns the first contacts as the order varies.
The limsup allows small quotients at some steps. For $\alpha=1$,
the first values are
\[
 m_1=m_2=720720,\qquad m_3=6983776800,\qquad
 m_4=160626866400,
\]
so $R_2=9690=2\cdot3\cdot5\cdot17\cdot19$, but $R_3=23$.
Table~\ref{tab:prime-jumps} gives further exact prime counts.

\begin{table}[htbp]
\centering\small
\caption{Prime counts in $R_k=m_{k+1}/m_k$ for $A_k(1,1)$.
The first contacts are proved using interval estimates and a bound
for the remaining tail; the quotient factorizations are exact.}\label{tab:prime-jumps}
\begin{tabular}{rrrrrrrrrr}
\toprule
$k$&1&2&3&4&5&6&7&8&9\\
\midrule
$\omega(R_k)$&0&5&1&27&51&128&262&600&1546\\
$\Omega(R_k)$&0&5&1&30&52&129&263&600&1547\\
\bottomrule
\end{tabular}
\end{table}

\Needspace{10\baselineskip}
\begin{proposition}[A jump within one level]\label{prop:fixed-level-jump}
The first two members of $A_2(1,1)$ are $720720$ and
$367567200$, and their quotient is
\begin{equation}\label{eq:fixed-level-jump}
 \frac{367567200}{720720}=510=2\cdot3\cdot5\cdot17.
\end{equation}
\end{proposition}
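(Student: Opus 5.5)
This is a finite computation, so the plan is to certify the first two members of $A_2(1,1)=\Cl_D(e^{u},-\rho^2)$ directly. Here $F(u)=e^u=\log n$, so the coordinates are $(\log n,-\rho(n)^2)$. By Lemma~\ref{lem:secant} with $k=2$, the first member $m_2$ is the smallest maximizer of the reference secant
\[
J_2(n)=\frac{\rho(n)^2-\rho_*^2}{\log n-\log n_*}.
\]
The table already records $m_2=720720$.

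The second member is obtained by repeating the secant argument from the new base point $720720$. Since the family satisfies the concavity hypothesis of Theorem~\ref{thm:windows} ($\alpha=1$), contacts are ordered so that each successive member of the level maximizes the secant
\[
\frac{\rho(n)^2-\rho(720720)^2}{\log n-\log 720720}
\]
over CA numbers $n>720720$. The smallest maximizer of this secant is the next contact. The argument is the same as in Lemma~\ref{lem:secant}: points strictly between the base and the maximizer are beaten either by the base or by the maximizer, according to whether $\lambda$ lies above or below the maximal slope.

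So the key steps, in order, are the following.
\begin{enumerate}
\item List the CA numbers from $720720$ upward: $1441440$, $4324320$, $21621600$, $367567200$, and so on.
\item Compute $\rho$ exactly for each, as a rational number from the factorization.
\item Evaluate both secants at each listed CA number. Confirm that $720720$ is the unique smallest maximizer from $n_*$, and that $367567200=2^5\cdot3^3\cdot5^2\cdot7\cdot11\cdot13\cdot17$ beats $1441440$, $4324320$ and $21621600$ for the secant from $720720$.
\item Finally, divide the two factorizations to get $510=2\cdot3\cdot5\cdot17$.
\end{enumerate}

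The main obstacle is the infinite tail. One must show that no large CA number has a larger secant from either base point. For this I would use \eqref{eq:Robinupper}: for $u(n)\ge u_0$,
\[
\frac{\rho(n)^2-\rho_0^2}{\log n-\log n_0}\le \frac{(Eu+0.6483/u)^2}{e^u-\log n_0}.
\]
The right side is decreasing in $u$ and falls below the computed maximal secant once $u$ exceeds a modest explicit threshold, roughly $u\approx 5$ or $6$. Below that threshold only finitely many CA numbers remain, and these are checked with rigorous interval arithmetic, as in Section~\ref{sec:certified}. Equality cases must also be excluded, so that the maximizers are unique and no tie adds an extra member.
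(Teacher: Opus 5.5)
Your plan is correct and is essentially the paper's proof: a certified finite hull computation on the initial CA numbers, with the tail excluded by \eqref{eq:Robinupper}. The paper certifies the contacts by unique maximization of $\rho(n)^2-\lambda\log n$ at $\lambda=1$ and $\lambda=39/40$ and by strict chord gaps for $1441440$, $4324320$, $21621600$, which is equivalent to your maximal-secant-from-the-previous-contact formulation.

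One small correction: that formulation is pure hull geometry and does not come from the concavity hypothesis of Theorem~\ref{thm:windows}. Because $720720$ is already a contact, the maximal right slope from it (about $0.979$) cannot exceed the slope from $n_*$ to it (about $1.092$), so the line through $720720$ with that slope also lies above the reference point.
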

\begin{proof}
Interval comparisons with outward rounding show that these two
numbers are the unique maximizers of $\rho(n)^2-\lambda\log n$ on $D$ for
$\lambda=1$ and $39/40$, respectively. The only CA numbers
strictly between them are $1441440$, $4324320$, and $21621600$;
each lies strictly below their chord in the coordinates
$(\log n,\rho(n)^2)$, and hence is never a contact.
There is no CA number between $n_*$ and $720720$.
Section~\ref{sec:certified} records the comparison margins and
the bound excluding the infinite tail. This proves adjacency;
the quotient is an exact integer calculation.
\end{proof}

For successive first contacts as $k$ varies,
Theorem~\ref{thm:prime-quotients} gives unbounded prime counts and
their limsup growth rate. Within a fixed level,
Proposition~\ref{prop:fixed-level-jump} shows that the classical
two-prime bound fails. We do not know whether the prime counts
are unbounded between consecutive members of each fixed
$A_k(\alpha,1)$.

\begin{remark}[Double critical values and the reduction]
\label{rem:critical-ties}
If a double critical value existed, its two intermediate CA
maximizers would satisfy $\Delta(n)=0$. By
Proposition~\ref{prop:parameter-gap}, neither can belong to any
of the arithmetic levels with abscissa concave in $\log n$.
Geometrically, the four classical points lie on one supporting
line in $(\log n,-\log\rho(n))$. An increasing concave change
of the first coordinate and the strictly concave change
$y\mapsto-e^{-ky}$ put the intermediate points strictly above
the transformed endpoint chord. They are already excluded at
the first level. For the truncation at $n_*$, the initial CA
range is verified directly, as in the proof of
Theorem~\ref{thm:windows}.

Consequently, empty intersection of these arithmetic families does
not rule out a double critical value. Preservation of Robin's
criterion does not itself preserve a violation of
\eqref{eq:prime-successor-conjecture}. To use this method for the conjecture, one would need a reduction
that preserves coincident critical parameters, or a separation
theorem that also covers the CA numbers excluded at the first level.
\end{remark}

\section{Families preserving global maxima}\label{sec:maxima}

In this section we choose coordinates that retain every global
maximizer of $G$ and eventually exclude every other fixed integer.
For these families, $m_k$ tends to infinity if and only if RH is
true.

\subsection{The intersection theorem}
We now use coordinates $(-q_k,-G^{s_k})$, so contacts maximize
$G^{s_k}+\lambda q_k$. Let $q_k(n)>0$ decrease strictly to zero
as $n\to\infty$ in $D$, and let $s_k>0$ be nondecreasing.
Assume that the changes from $-q_k$ to $-q_{k+1}$ are increasing
concave differentiable maps with positive derivatives, and that
\begin{equation}\label{eq:weightescape}
 \frac{q_k(n)}{q_k(n_*)}\longrightarrow0
 \quad(k\to\infty)\qquad\text{for every fixed }n>n_*.
\end{equation}
Set
\begin{equation}\label{eq:Bfamily}
 B_k=\Cl_D(-q_k,-G^{s_k}),\qquad
 \mathcal M=\argmax_{n\in D}G(n).
\end{equation}

\begin{theorem}[Exact preservation]\label{thm:exact}
Under these assumptions the sets $B_k$ are nested, are
Robin-preserving at every level, and satisfy
\begin{equation}\label{eq:exact}
 \bigcap_{k\ge1}B_k=\mathcal M.
\end{equation}
Consequently,
\begin{equation}\label{eq:RHequivalences}
 \RH\quad\Longleftrightarrow\quad
 \mathcal M=\varnothing\quad\Longleftrightarrow\quad
 \bigcap_k B_k=\varnothing\quad\Longleftrightarrow\quad
 m_k(B)\longrightarrow\infty.
\end{equation}
If RH is false, every level is finite and nonempty, and the sets
eventually stabilize at the finite nonempty set $\mathcal M$.
\end{theorem}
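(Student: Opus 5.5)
The plan is to prove the four assertions of Theorem~\ref{thm:exact} in order: nesting, containment of $\mathcal M$, the intersection identity \eqref{eq:exact}, and the RH consequences. Contacts at level $k$ are exactly the $n\in D\setminus\{n_*\}$ maximizing $G(n)^{s_k}+\lambda q_k(n)$ over $D$ for some $\lambda\ge0$.

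\textbf{Nesting and maximum preservation.} For nesting I will apply Theorem~\ref{thm:nesting}. The abscissa change $-q_k\mapsto-q_{k+1}$ is increasing and concave by hypothesis. The ordinate change is $y\mapsto-(-y)^{s_{k+1}/s_k}$ on $y<0$, which is increasing and concave because $s_{k+1}/s_k\ge1$.

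For $\mathcal M\subseteq B_k$, take $\lambda=0$: the contacts are then exactly the maximizers of $G^{s_k}$ on $D$, which form $\mathcal M$. One detail must be checked, namely that $n_*\notin\mathcal M$ whenever $\mathcal M\neq\varnothing$. Under RH, $G<E$ on $D$ and $G\to E$ by Lemma~\ref{lem:CA-scale}, so the supremum is not attained and $\mathcal M=\varnothing$. Under $\neg$RH there are values $G>E>G_*$, so $n_*$ is not a maximizer. Robin preservation then follows from the remark after Definition~\ref{def:classes}: under $\neg$RH the set $\mathcal M$ consists of counterexamples and lies in every level, and under RH the equivalence is trivial.

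\textbf{The intersection identity.} It remains to show that a fixed $n\in D\setminus(\mathcal M\cup\{n_*\})$ is eventually excluded. Since $n\notin\mathcal M$, there is $m\in D$ with $G(m)>G(n)$, and I split into two cases according to the position of $m$.
\begin{itemize}
\item If $m<n$, then $q_k(m)>q_k(n)$. Hence $m$ beats $n$ strictly for every $\lambda\ge0$, and $n$ is never a contact.
\item If $m>n$ and $n\in B_k$ with parameter $\lambda$, comparison with $m$ forces
\[
\lambda\ge\frac{G(m)^{s}-G(n)^{s}}{q_k(n)-q_k(m)}>0,\qquad s=s_k.
\]
Comparison with $n_*$ gives $\lambda\,(q_k(n_*)-q_k(n))\le G(n)^{s}-G_*^{s}$. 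If $G(n)\le G_*$, this contradicts $\lambda>0$ immediately. Otherwise, combining the two inequalities and using $q_k(n)-q_k(m)<q_k(n)$ yields
\[
\bigl((G(m)/G(n))^{s}-1\bigr)\Bigl(1-\frac{q_k(n)}{q_k(n_*)}\Bigr)\le\frac{q_k(n)}{q_k(n_*)}.
\]
Since $s_k$ is nondecreasing, the first factor on the left is bounded below by a positive constant. By \eqref{eq:weightescape} the right side tends to $0$, so this is impossible for large $k$.
\end{itemize}
The main delicacy is this bookkeeping: the case split on the position of $m$, and the use of monotonicity of $s_k$ to keep the first factor away from zero whether $s_k$ is bounded or tends to infinity.

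\textbf{Consequences.} The first equivalence in \eqref{eq:RHequivalences} has already been established: RH holds if and only if $\mathcal M=\varnothing$. The second equivalence is \eqref{eq:exact}, and the third is Proposition~\ref{prop:minima}.

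Under $\neg$RH, $\mathcal M$ is finite and nonempty, because $G\to E$ while its maximum exceeds $E$. For finiteness of $B_k$, let $M_0=\max\mathcal M$ and $g=\max G$. A contact $n$ with parameter $\lambda$ satisfies $G(n)^{s}+\lambda q_k(n)\ge g^{s}+\lambda q_k(M_0)$ with $G(n)\le g$. If $n\notin\mathcal M$, then $G(n)<g$, so this forces $\lambda>0$ and $q_k(n)>q_k(M_0)$, that is, $n<M_0$. Hence $B_k\subseteq[n_*,M_0]\cap D$ is finite, and it is nonempty since it contains $\mathcal M$. A nested sequence of finite sets with intersection $\mathcal M$ is eventually equal to $\mathcal M$, which gives the stabilization.
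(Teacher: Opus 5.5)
Your proposal is correct and follows essentially the same route as the paper. Nesting comes from Theorem~\ref{thm:nesting}, and horizontal supports put $\mathcal M$ in every level. A fixed non-maximizer is excluded either by an earlier point with larger $G$, or by the chord from $n_*$ to a later point with larger $G$ via \eqref{eq:weightescape}. Under $\neg\RH$, finiteness and stabilization follow because $\max\mathcal M$ beats every later point. The only difference is cosmetic: you carry the exponent $s_k$ through the two-sided bound on $\lambda$, using $s_k\ge s_1$, whereas the paper works with the fixed ordinate $-G^{s_1}$ and then applies the concave change $-G^{s_1}\mapsto -G^{s_k}$.
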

\begin{proof}
Nesting follows from Theorem~\ref{thm:nesting}. A global maximizer
has a horizontal support at every level, so $\mathcal M\subseteq
B_k$; the reference is not such a maximizer because
$G_*<E=\lim_DG$.

Fix $n>n_*$ outside $\mathcal M$. If some $a<n$ in $D$ has
$G(a)\ge G(n)$, then $a$ strictly beats $n$ for every
$\lambda>0$, and $n$ has no horizontal support. Otherwise
$G(n)>G_*$, and a point $b>n$ exists with $G(b)>G(n)$.
First use the fixed ordinate $-G^{s_1}$. At abscissa $-q_k(n)$,
the coefficient of the reference in the chord from $n_*$ to $b$
is
\[
 \theta_k=\frac{q_k(n)-q_k(b)}{q_k(n_*)-q_k(b)}\longrightarrow0
\]
by \eqref{eq:weightescape}. The chord ordinate tends to
$-G(b)^{s_1}<-G(n)^{s_1}$, and eventually excludes $n$.
Changing $-G^{s_1}$ to $-G^{s_k}$ is increasing and concave,
and cannot restore a contact. This proves \eqref{eq:exact}.

Under RH, all values of $G$ are less than their limit $E$, so
the supremum is unattained. Under $\neg\RH$, some value exceeds
$E$; since $G(n)\to E$, the global maximum is attained at
finitely many integers and is greater than $E$. This proves the
first two equivalences in \eqref{eq:RHequivalences};
Proposition~\ref{prop:minima} proves the last. Preservation of the global maxima also gives Robin's criterion
at every level.

Under $\neg\RH$, let $b=\max\mathcal M$. For $n>b$, both
$G(n)<G(b)$ and $q_k(n)<q_k(b)$, so $b$ beats $n$ for every
$\lambda\ge0$. All levels lie in the same finite initial
interval, and nesting with \eqref{eq:exact} forces stabilization.
\end{proof}

A nonempty intersection is therefore finite. For these families,
proving that the first contacts tend to infinity would prove RH.

\subsection{Examples with nonempty levels}
For any $w(n)>0$ strictly decreasing to zero, the power scheme
\begin{equation}\label{eq:powers}
 q_k(n)=w(n)^{r_k},\qquad 0<r_k\uparrow\infty,
 \qquad s_k>0\text{ nondecreasing}
\end{equation}
satisfies Theorem~\ref{thm:exact}. The transition of negative
abscissas is $x\mapsto-(-x)^{r_{k+1}/r_k}$. The choice
$r_k=s_k=k$ gives three examples:
\begin{equation}\label{eq:PLS}
 \begin{array}{c|ccc}
 \text{family}&P&L&S\\ \hline
 w(n)&u(n)^{-1}&[\log(1+u(n))]^{-1}&e^{-\sqrt{u(n)}}.
 \end{array}
\end{equation}
More generally, $e^{-u^\alpha}$ may replace the last weight for
any $0<\alpha<1$.

Another example is the fractional-linear scheme
\begin{equation}\label{eq:fractional}
 q_k(n)=\frac1{1+kt(n)},\qquad s_k=k,
\end{equation}
where $t$ increases strictly from $t(n_*)=0$ to infinity.
For $\ell\ge k$, the transition of negative abscissas is
\[
 x\longmapsto\frac{x}{a+(a-1)x},\qquad a=\ell/k,\quad -1\le x<0.
\]
Its first derivative is positive and its second derivative is
nonpositive; \eqref{eq:weightescape} also holds. Two choices are
$t=u/u_*-1$ and $t=\log(u/u_*)$.

\begin{proposition}[Infinitude and stabilization]\label{prop:infinite}
For every scheme in \eqref{eq:PLS}, for $w=e^{-u^\alpha}$ with
$0<\alpha<1$, and for the two fractional-linear choices above,
every level is nonempty unconditionally. Under RH every level is
infinite. Under $\neg\RH$ every level is finite, and the sets
eventually stabilize at $\mathcal M$. Thus infinitude of any
fixed level of any of these schemes is equivalent to RH.
\end{proposition}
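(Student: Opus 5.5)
The plan is to split according to RH. For the $\neg\RH$ half there is nothing new to prove. Theorem~\ref{thm:exact} already gives that every level is finite, that $\mathcal M\neq\varnothing$, and that $\mathcal M\subseteq B_k$ for every $k$, with stabilization at $\mathcal M$. So under $\neg\RH$ every level is nonempty and finite. The real work is therefore the RH half: I will show that under RH every level $B_k$ of each listed scheme is infinite, hence nonempty. Unconditional nonemptiness then follows by cases. Once both halves are established, infinitude of any fixed level holds under RH and fails under $\neg\RH$, which gives the final equivalence.

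Assume RH and fix $k$. Contacts at level $k$ are the maximizers over $D$ of $\Phi_\lambda(n)=G(n)^{s_k}+\lambda q_k(n)$ with $\lambda\ge0$. For $\lambda=0$ there is no maximizer, because $G<E$ on $D$ while $G\to E$ by Lemma~\ref{lem:CA-scale}. For $\lambda>0$ we have $\Phi_\lambda(n)\to E^{s_k}$ along $D$, since $q_k\to0$. Hence $\Phi_\lambda$ attains its supremum as soon as some $n\in D$ satisfies $\Phi_\lambda(n)>E^{s_k}$, that is,
\[
 E^{s_k}-G(n)^{s_k}<\lambda q_k(n).
\]
The left side is $\asymp_k E-G(n)$. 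So the key input is a quantitative statement under RH: along some sequence of CA numbers,
\[
 E-G(n)=o\bigl(q_k(n)\bigr)\qquad\text{for every fixed }k.
\]
Given this, for every $\lambda>0$ there are arbitrarily large $n$ with $\Phi_\lambda(n)>E^{s_k}$, and therefore a maximizer exists. When $\lambda$ is small, $\Phi_\lambda(n_*)\to G_*^{s_k}<E^{s_k}$, so the maximizer is not the excluded reference and lies in $B_k$. As $\lambda\downarrow0$, each fixed $n$ has $\Phi_\lambda(n)\to G(n)^{s_k}<E^{s_k}$. The maximal value, however, stays above $E^{s_k}$. The maximizers therefore escape every finite set, and $B_k$ is infinite.

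The main obstacle is the estimate $E-G(n)=o(q_k(n))$. For it I would invoke Robin's (and Nicolas') explicit lower bound under RH along CA numbers, of the form
\[
 \rho(n)\ge Eu(n)\Bigl(1-\frac{C\,u(n)}{\sqrt{\log n}}\Bigr),
\]
or any bound with $E-G(n)\le e^{-u(n)/2+O(\log u(n))}$. It then suffices to compare this with each weight. For $w=u^{-1}$ and $w=[\log(1+u)]^{-1}$ the weight $q_k=w^k$ decays only polynomially or logarithmically in $u$, so the comparison is trivial. For $w=e^{-u^\alpha}$ with $0<\alpha<1$ we have $q_k=e^{-ku^\alpha}$, which still dominates $e^{-u/2+O(\log u)}$ because $\alpha<1$. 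This is exactly why $\alpha=1$ is excluded. For the fractional-linear choices, $q_k=1/(1+kt)$ with $t$ linear or logarithmic in $u$, again a polynomial-type decay. If the precise form of Robin's RH bound is awkward to cite, a fallback is to use only a weaker estimate $E-G(n)=O(u^{-A})$ for every $A$ along CA numbers. That would suffice for every weight except $e^{-u^\alpha}$, which genuinely needs the exponential-in-$u$ accuracy coming from the $(\log n)^{-1/2}$ error term.
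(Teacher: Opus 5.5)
Your proposal is correct and follows essentially the same route as the paper. The $\neg\RH$ half is read off from Theorem~\ref{thm:exact}. Under RH, the Ramanujan-type gap bound $E-G(n)=O(e^{-u/2}/u)$ (your weaker $e^{-u/2+O(\log u)}$ suffices) gives $\delta_k/q_k\to0$ because $\log(1/q_k)=o(u)$, and the attained-extremum argument with $\lambda\downarrow0$ then forces infinitely many contacts.
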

\begin{proof}
Only the RH assertion remains. The Ramanujan CA estimates,
recalled in Section~\ref{sec:RN}, give, for fixed $k$,
\[
 0<\delta_k(n):=E^{s_k}-G(n)^{s_k}=O(e^{-u(n)/2}/u(n)).
\]
Each stated weight satisfies $\log(1/q_k)=o(u)$, hence
$\delta_k/q_k\to0$. For every $\lambda>0$ the function
$\delta_k-\lambda q_k$ is eventually negative and tends to zero,
so its negative minimum is attained. If contacts were confined to
a finite set, a sufficiently small $\lambda>0$ would make this
function positive at all of them and at the reference, a
contradiction. Thus there are infinitely many contacts.
\end{proof}

Under RH there are infinitely many strict inclusions, though not
necessarily at every step. The infinitude of each level requires
the tail argument above; it cannot be inferred from a finite hull.

\subsection{Records and bounds for the first contacts}
\begin{theorem}[Monotone Robin quotients]\label{thm:Gmin}
For a scheme satisfying Theorem~\ref{thm:exact} with every level
nonempty, $G(m_k(B))$ is nondecreasing, strictly increasing whenever
$m_k(B)$ increases, and
\begin{equation}\label{eq:Glimit}
 G(m_k(B))\longrightarrow\sup_{n\in D}G(n).
\end{equation}
RH is equivalent to Robin's inequality at every first contact,
and also to $\lim_kG(m_k(B))=E$.
\end{theorem}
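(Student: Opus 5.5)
The plan is to establish a \emph{record property} for contacts and then combine it with nesting and Theorem~\ref{thm:exact}. The record property says: if $n\in B_k$ and $n\notin\mathcal M$, then $G(a)<G(n)$ for every $a\in D$ with $a<n$.

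\textbf{Step 1: the record property.} Let $\lambda\ge0$ be a supporting parameter for $n$, so $n$ maximizes $G^{s_k}+\lambda q_k$ over $D$.
\begin{itemize}
\item If $\lambda=0$, then $n$ is a global maximizer of $G$ on $D$, so $n\in\mathcal M$.
\item If $\lambda>0$ and some $a<n$ in $D$ has $G(a)\ge G(n)$, then $q_k(a)>q_k(n)$ by strict monotonicity of $q_k$. Hence $a$ strictly beats $n$, which is a contradiction.
\end{itemize}
This is the same argument used in the proof of Theorem~\ref{thm:exact}. The only subtlety is ties, and they occur only inside $\mathcal M$.

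\textbf{Step 2: monotonicity and strict increase.} Nesting and Proposition~\ref{prop:minima} give $m_k\le m_{k+1}$, so it suffices to treat $m_{k+1}>m_k$.
\begin{itemize}
\item First, $m_k\notin\mathcal M$. Otherwise $m_k\in\mathcal M\subseteq B_{k+1}$ would force $m_{k+1}\le m_k$.
\item Next, $m_{k+1}\in B_{k+1}\subseteq B_k$ is a level-$k$ contact.
\item If $m_{k+1}\notin\mathcal M$, Step~1 with $a=m_k<m_{k+1}$ gives $G(m_k)<G(m_{k+1})$.
\item If $m_{k+1}\in\mathcal M$, then $G(m_{k+1})=\max G\ge G(m_k)$. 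Equality would put $m_k\in\mathcal M$, which we excluded, so the inequality is strict.
\end{itemize}
This proves that $G(m_k(B))$ is nondecreasing, and strictly increasing whenever $m_k(B)$ increases.

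\textbf{Step 3: the limit \eqref{eq:Glimit}.} Split according to Theorem~\ref{thm:exact}.
\begin{itemize}
\item Under $\neg\RH$, the levels stabilize at the finite set $\mathcal M$. By Proposition~\ref{prop:minima}, $m_k$ is eventually $\min\mathcal M$, so $G(m_k)$ is eventually $\max_DG=\sup_DG$, and this value exceeds $E$.
\item Under RH, $\bigcap_kB_k=\varnothing$, so $m_k\to\infty$ through $D$. Lemma~\ref{lem:CA-scale} then gives $G(m_k)\to E$. Since $G<E$ on $D$ under RH and $G\to E$ along $D$, we have $\sup_DG=E$.
\end{itemize}

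\textbf{Step 4: the equivalences.}
\begin{itemize}
\item Under RH, Robin's inequality holds at every first contact, and $\lim_k G(m_k)=E$.
\item Under $\neg\RH$, the first contacts eventually lie in $\mathcal M$, where $G>E$, so Robin's inequality fails there. Also $\lim_k G(m_k)=\max G>E$.
\end{itemize}
Hence each of the two stated conditions is equivalent to RH.

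The step needing the most care is Step~1 together with the tie bookkeeping in Step~2. The issue is that horizontal supports ($\lambda=0$) and the all-ties convention could in principle allow an equal value of $G$ at an earlier contact. The resolution is that such ties occur only within $\mathcal M$, and $\mathcal M$ lies in every level.
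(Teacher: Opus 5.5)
Your proof is correct and uses the paper's ingredients: the strict-record property for contacts with a positive support parameter, the fact that horizontal supports occur only at points of $\mathcal M$, and the inclusion $\mathcal M\subseteq B_k$, which gives strictness at jumps. The only minor difference is in the limit \eqref{eq:Glimit}. You split by RH versus $\neg\RH$ and invoke Lemma~\ref{lem:CA-scale} on the escaping branch, whereas the paper splits by stabilization versus escape of $m_k$ and gets $G(m_k)\to\sup_D G$ directly from the strict-record property (eventually $m_k>a$ forces $G(m_k)>G(a)$), using $G\to E$ only to identify the supremum as $E$ under RH.
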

\begin{proof}
If $n$ has a positive support parameter, comparison with any
$a<n$ in $D$ gives
\[
 G(n)^{s_k}-G(a)^{s_k}\ge
 \lambda(q_k(a)-q_k(n))>0.
\]
Thus $n$ is a strict record of $G$ on $D$. A contact having only
a horizontal support is a global maximizer. Every level contains
every global maximizer, so its least element cannot jump past
$\min\mathcal M$ when this exists. A new first contact with
positive support beats every earlier point; if it is a global
maximizer, the preceding first contact cannot already have been
one. This proves monotonicity and strictness at jumps.

If $m_k$ stabilizes, its eventual value is $\min\mathcal M$ by
\eqref{eq:exact}. If $m_k\to\infty$, then $\mathcal M$ is empty,
so every contact has positive support. Given $a\in D$, eventually
$m_k>a$ and $G(m_k)>G(a)$, proving \eqref{eq:Glimit}.
Under $\neg\RH$ the eventual first contact has value greater
than $E$; under RH every value is less than $E$ and the supremum
is $E$.
\end{proof}

For the arithmetic families, $G(m_k)\to E$ holds unconditionally
by Lemma~\ref{lem:CA-scale}. We have not proved monotonicity, nor
that Robin's inequality can be reduced to their least members.

For $P_k=\Cl_D(-u^{-k},-G^k)$, membership has an explicit
interpretation: for some $A>0$ and $B\ge0$,
\begin{equation}\label{eq:powercurve}
 \rho(m)^k\le A u(m)^k-B\quad(m\in D),\qquad
 \rho(n)^k=A u(n)^k-B.
\end{equation}
Indeed, $A=\max_D(G^k+B u^{-k})$. Orders $3,4,\ldots$
require global supporting curves $\rho^3=A u^3-B$,
$\rho^4=A u^4-B$, and so forth.

\begin{proposition}[Strengthening with the order]\label{prop:strength}
Normalize a support at $n$ by $b=B/\rho(n)^k$. For $1\le k<\ell$,
every normalized support $b\ge0$ at order $\ell$ is also a
normalized support at order $k$. If $b>0$, the order-$k$
inequality is strict for every $m\ne n$. Every $n\in P_k$
also satisfies the unconditional bound
\begin{equation}\label{eq:gapconstraint}
 G(n)^k\ge E^k-(E^k-G_*^k)(u_*/u(n))^k.
\end{equation}
If $G(n)<E$, then
\begin{equation}\label{eq:gapsimple}
 0<E-G(n)\le\frac E k
 \frac{(u_*/u(n))^k}{1-(u_*/u(n))^k}.
\end{equation}
Under RH the first contacts obey
\begin{equation}\label{eq:Pescape}
 \liminf_{k\to\infty}\frac{u(m_k(P))}{k\log k}\ge2.
\end{equation}
\end{proposition}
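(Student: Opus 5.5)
The plan is to work directly with the support inequality \eqref{eq:powercurve}. A contact of $P_k$ maximizes $G^k+\lambda u^{-k}$, and $B=\lambda$. Dividing \eqref{eq:powercurve} by $\rho(n)^k$ and using equality at $n$ gives $A/\rho(n)^k=(1+b)/u(n)^k$. Write $r=\rho(m)/\rho(n)$ and $t=u(m)/u(n)$. Then a normalized order-$k$ support is exactly
\[
 r^k\le(1+b)t^k-b\qquad(m\in D).
\]
The four assertions are then handled in order: comparison of orders, the bound \eqref{eq:gapconstraint}, the bound \eqref{eq:gapsimple}, and the escape rate \eqref{eq:Pescape}.

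\emph{Comparison of orders.} Suppose $r^\ell\le(1+b)T-b$ with $T=t^\ell$. Put $S=(1+b)T-b$; then $S\ge r^\ell>0$. Let $c=k/\ell<1$. Since
\[
 T=\tfrac1{1+b}S+\tfrac b{1+b}\cdot1,
\]
concavity of $s\mapsto s^c$ (Jensen) gives $T^c\ge \frac1{1+b}S^c+\frac b{1+b}$. Hence
\[
 r^k=(r^\ell)^c\le S^c\le(1+b)t^k-b,
\]
which is the order-$k$ support with the same $b$. If $b>0$, Jensen is strict unless $S=1$. Now $S=1$ iff $T=1$ iff $u(m)=u(n)$ iff $m=n$. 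So the order-$k$ inequality is strict for every $m\ne n$.

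\emph{The bound \eqref{eq:gapconstraint}.} Compare $n$ with two competitors in the maximization of $G^k+\lambda u^{-k}$:
\begin{itemize}
\item the reference $n_*$, which gives $\lambda(u_*^{-k}-u(n)^{-k})\le G(n)^k-G_*^k$;
\item CA numbers $m\to\infty$. Lemma~\ref{lem:CA-scale} gives $G(m)\to E$ and $u(m)^{-k}\to0$, so $G(n)^k+\lambda u(n)^{-k}\ge E^k$.
\end{itemize}
Put $q=(u_*/u(n))^k<1$. Eliminating $\lambda$ between the two inequalities gives
\[
 G(n)^k(1-q)\ge E^k(1-q)-(G(n)^k-G_*^k)q,
\]
which rearranges to $G(n)^k\ge E^k-(E^k-G_*^k)q$. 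This is \eqref{eq:gapconstraint}.

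\emph{The bound \eqref{eq:gapsimple}.} Assume $G=G(n)<E$. The previous step gives $E^k-G^k\le E^kq$ and $(G/E)^k\ge1-q$. By the mean value theorem, $E^k-G^k\ge kG^{k-1}(E-G)$. Also $(G/E)^{k-1}\ge(1-q)^{(k-1)/k}\ge1-q$. Combining,
\[
 E-G\le\frac{E^kq}{kG^{k-1}}\le\frac{Eq}{k(1-q)}.
\]

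\emph{The escape rate \eqref{eq:Pescape}; this is the main obstacle.} This step needs a lower bound for $E-G$ on CA numbers under RH, i.e.\ the conditional Ramanujan estimate recalled in Section~\ref{sec:RN}. That estimate reads
\[
 \rho(n)\le E\bigl(u(n)-c\,e^{-u(n)/2}\bigr)\quad\text{with } c>0,
\]
and hence $E-G(n)\ge c'e^{-u(n)/2}/u(n)$ for large CA $n$. I expect this to be the only nontrivial input, and the step should fail without it: if the available bound were weaker than the $e^{-u/2}/u$ order, the constant $2$ would change.

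Under RH, Theorem~\ref{thm:exact} gives $m_k\to\infty$, so $u_k:=u(m_k)\to\infty$ and $q\to0$. Combining the lower bound with \eqref{eq:gapsimple} and taking logarithms gives
\[
 k\log u_k\le \frac{u_k}{2}+\log u_k+k\log u_*+\log k+O(1).
\]
Suppose, along a subsequence, $u_k\le(2-\delta)k\log k$ for some $\delta>0$. Since $u_k\to\infty$ and $u_k\ge u_*$, $\log u_k\le\log k+O(\log\log k)$, so the right side is at most $(1-\delta/2)k\log k+O(k)$. On the left we would use $\log u_k\ge(1-o(1))\log k$. To justify this, first run the same inequality with a crude lower bound: if $u_k\le k^{1/2}$, the left side $k\log u_k$ is of order $k\log u_k$ while the right side is $O(k+\sqrt k)$, which forces $\log u_k=O(1)$. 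That is impossible, since $u_k\to\infty$ gives $\log u_k\to\infty$. Bootstrapping in this way gives $\log u_k\sim\log k$. Then the left side is $(1-o(1))k\log k$, contradicting the upper bound $(1-\delta/2)k\log k+O(k)$. This proves the $\liminf\ge2$ in \eqref{eq:Pescape}.
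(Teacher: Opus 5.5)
Your proof is correct and follows essentially the same route as the paper. The steps are the Jensen/concavity transfer from order $\ell$ to order $k$ with strictness for $b>0$, and the chord through the reference and the limiting point $(0,-E^k)$, which your elimination of $\lambda$ reproduces algebraically. Then come an elementary inequality for \eqref{eq:gapsimple} (the mean value theorem where the paper uses $1-z\le-\log z$), and the Ramanujan lower gap bound under RH combined with $u(m_k)\to\infty$. Two harmless blemishes: the $\log k$ term in your logarithmic inequality should carry a minus sign, but your version is weaker and still true, so nothing breaks. Also, the bootstrap can be replaced by simply dividing by $k$, which gives $u_k/k\to\infty$ and hence $\log u_k\ge\log k$ eventually, as the paper does.
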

\begin{proof}
The normalized support is
\[
 (\rho(m)/\rho(n))^k+b\le(1+b)(u(m)/u(n))^k.
\]
Suppose it holds at order $\ell$. Put
$z=(u(m)/u(n))^\ell$, $v=(\rho(m)/\rho(n))^\ell$,
$t=(1+b)z-b\ge v>0$, and $r=k/\ell$. Concavity gives
$v^r\le t^r\le(1+b)z^r-b$. For $b>0$ and $m\ne n$ the
second inequality is strict.

A lower support at $(-u(n)^{-k},-G(n)^k)$ lies below the
reference and the limiting point $(0,-E^k)$, hence below their
chord. This gives \eqref{eq:gapconstraint}. Put $z=G(n)/E<1$
and $t=(u_*/u(n))^k$. Then $z^k\ge1-t$, whence
$1-z\le-\log z\le-\log(1-t)/k\le t/[k(1-t)]$.

Under RH, the Ramanujan lower gap bound gives $c>0$ such that
$E-G(n)\ge c e^{-u/2}/u$ for sufficiently large CA numbers.
Since $u_k=u(m_k(P))\to\infty$, \eqref{eq:gapsimple} yields
\[
 \frac{u_k}{2}\ge(k-1)\log u_k-k\log u_*+\log k+O(1).
\]
Here $(u_*/u_k)^k\to0$, so the omitted logarithmic correction
is bounded. Division by $k$ first gives $u_k/k\to\infty$;
then substituting $\log u_k\ge\log k$ gives \eqref{eq:Pescape}.
\end{proof}

For a fixed record $n$ that is not a global maximizer, one later
CA number $b$ with $G(b)>G(n)$ suffices to exclude it from all
sufficiently high levels, by the proof of Theorem~\ref{thm:exact}.
The remaining question is how to find such a successor for every
record without assuming RH.

\section{Examples and comparisons}\label{sec:examples}

We give numerical examples of the growth and factorization results.
The computations use interval estimates and bounds for the
uncomputed tail. We then apply the Ramanujan--Nicolas estimates
to obtain chains that terminate under RH, and discuss several
remaining questions.

\subsection{Computation of the first contacts}\label{sec:certified}
Table~\ref{tab:numerical} lists first contacts for several arithmetic
families. The estimates below prove that these are first contacts
in the infinite sets. Let $a_j$ be the increasing CA sequence
with $a_1=2$, so $a_9=n_*$ \cite{OEISca}. The table reports the index of
$m_k$, its number of decimal digits, and $u=\log\log m_k$.
The displayed values of $u$ are rounded; the computations use
interval arithmetic with outward rounding at 70 decimal digits.

\begin{table}[htbp]
\centering\small
\caption{First contacts of the arithmetic families. The values are
proved unconditionally, including comparison with the infinite tail.}\label{tab:numerical}
\begin{tabular}{lrrrr}
\toprule
$F(u)$&$k$&CA index&Digits&$u(m_k)$\\
\midrule
$e^{\sqrt u}$&1&16&12&3.250465570\\
$e^{2\sqrt u}$&1&10&6&2.601800845\\
&2&16&12&3.250465570\\
&3&626&1834&8.348166205\\
\addlinespace
$e^{u^2}$&1--16&10&6&2.601800845\\
&32&27&25&4.024290044\\
&64&85&134&5.730804175\\
&128&490&1359&8.048426289\\
&200&2595&9775&10.021573915\\
\addlinespace
$e^{(\log u)^2}$&1--2&10&6&2.601800845\\
&3&16&12&3.250465570\\
&4&254&595&7.222014147\\
\bottomrule
\end{tabular}
\end{table}

Here $a_{10}=720720$ and $a_{16}=160626866400$. The plateau
$1\le k\le16$ for $e^{u^2}$ follows from the values at the two endpoints and nesting. At $k=200$ its first contact has $9775$
digits, while its double logarithm is close to
$\sqrt{200/2}=10$. For $e^{(\log u)^2}$ at $k=4$,
$\log u(m_4)=1.9771\ldots$, compared with the limiting scale $2$.

The critical-parameter algorithm independently checks the first
$10000$ prime-exponent events against the tabulated CA multipliers
\cite{OEISf}. All competing critical intervals are disjoint in
this range, so no tie is omitted. The reference secants \eqref{eq:secant} are compared using interval
arithmetic with outward rounding. For $F=e^f$,
the entire remaining tail is bounded using \eqref{eq:Robinupper}:
\begin{equation}\label{eq:tailcert}
 J_k(n)\le B_k(u):=
 \frac{(Eu+0.6483/u)^k}{e^{f(u)}-e^{f(u_*)}}.
\end{equation}
Differentiation gives
$\frac{d}{du}\log B_k(u)<k/u-f'(u)$.
For every displayed case, $u f'(u)>k$ holds at the cutoff
$u=11.5450072551\ldots$ and thereafter. The upper bound for $B_k$ at this cutoff is below the lower bound
for the largest computed secant; their ratio is less than $0.948$
in every case. Thus no CA number beyond the computed range can
change a first contact in the table. The ancillary files accompanying
this preprint contain the interval bounds, input tables, and scripts
for these computations.

The same method proves the first-contact values used in
Table~\ref{tab:prime-jumps}. Subtracting the corresponding prime
exponents gives the counts in that table. For
Proposition~\ref{prop:fixed-level-jump}, direct comparisons of the supporting linear functions have margins greater than $0.03199015$ at $\lambda=1$
and $0.01486931$ at $\lambda=39/40$, over every other CA number
among the first $10000$ events. The three intervening points have
positive chord gaps greater than $0.01749370$, $0.00276045$, and
$0.08177303$, respectively. For the entire remaining tail,
\eqref{eq:Robinupper} bounds the objective by
\[
 B_\lambda(u)=(Eu+0.6483/u)^2-\lambda e^u.
\]
At the same cutoff this bound is negative, whereas the objective
is positive at each of the two proposed maximizers. It is decreasing thereafter, since
\[
 B_\lambda'(u)\le2E(Eu+0.6483/u)-\lambda e^u<0;
\]
the last inequality is verified by interval arithmetic at the cutoff
and holds thereafter because $(Eu+0.6483/u)e^{-u}$ decreases for
$u>1$. These estimates prove that the two contacts are consecutive
in the infinite set.

\subsection{Ramanujan and Nicolas: terminating chains}\label{sec:RN}
The normalized Robin gap and its natural scale are
\begin{equation}\label{eq:Rgap}
 h(n)=\frac{e^{-u(n)/2}}{u(n)},\qquad
 \Rgap(n)=\frac{E-G(n)}{h(n)}
       =\sqrt{\log n}\,[Eu(n)-\rho(n)].
\end{equation}
Ramanujan's CA estimates, with the strong lower extension in
\cite{Musin}, imply under RH that
\begin{equation}\label{eq:Ramanujan}
 c_1\le\liminf_{n\in D}\Rgap(n)
 \le\limsup_{n\in D}\Rgap(n)\le c_2,
\end{equation}
where
\begin{align*}
 c_1&=E(2\sqrt2-4-\gamma+\log(4\pi))=1.3932184417\ldots,\\
 c_2&=E(2\sqrt2+\gamma-\log(4\pi))=1.5577589626\ldots.
\end{align*}
For every $0<c<c_1$, the strengthened inequality
\begin{equation}\label{eq:Ramstrength}
 \sigma(n)+\frac{cn}{\sqrt{\log n}}<En\log\log n
\end{equation}
holds eventually under RH (the strong lower extension allows all
integers here). These inequalities are stronger than Robin's inequality on the
stated ranges. The limit statement does not give an explicit
starting point or allow $c=c_1$. Their proofs require the
quantitative gap estimates, which do not follow from the preceding
reductions of Robin's criterion.

Nicolas \cite[Theorem 1.1 and Corollary 1.2]{Nicolas} gives
effective forms, including a bound involving the nontrivial zeta
zeros. In our normalization his Corollary 1.2 implies under RH
\begin{equation}\label{eq:Nicolas}
 \Rgap(n)\ge\Rgap(110880)=0.3863541276\ldots
 \qquad(n>55440),
\end{equation}
which exceeds $R_*:=\Rgap(55440)=0.2356799944\ldots$.
This is stronger than Robin's inequality on the stated range.
The additional gap estimate is used in the next proposition.
It is conditional on RH and does not provide a simpler proof of RH.
If RH is proved, these inequalities and the resulting empty contact
sets will become unconditional properties.

\begin{proposition}[An exact chord test]\label{prop:chordempty}
For $r,s>0$, put $H_{r,s}=\Cl_D(-h^r,-G^s)$ and
$\delta_s(n)=E^s-G(n)^s$. Then $H_{r,s}=\varnothing$ if and
only if, for every $n\in D\setminus\{n_*\}$,
\begin{equation}\label{eq:chordempty}
 \delta_s(n)>\delta_s(n_*)[h(n)/h(n_*)]^r.
\end{equation}
Under RH, $H_{r,s}$ is empty for every $r\ge1$ and $s>0$.
Thus $\Cl_D(-h^k,-G^k)$ terminates at its first level under
RH, whereas under $\neg\RH$ it eventually stabilizes at
$\mathcal M$.
\end{proposition}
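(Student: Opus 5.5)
The plan is to work directly with the objective defining a contact. A point $n\in D\setminus\{n_*\}$ lies in $H_{r,s}$ precisely when some $\lambda\ge0$ makes $n$ a minimizer over $D$ of
\[
 g_\lambda(m)=\delta_s(m)-\lambda h(m)^r ,
\]
because $G^s+\lambda h^r=E^s-g_\lambda$. Two facts are used throughout. First, $h$ is strictly decreasing on $D$ and tends to $0$. Second, $\delta_s(m)\to0$ by Lemma~\ref{lem:CA-scale}. Hence $g_\lambda(m)\to0$ along $D$, which is the limiting point $(0,-E^s)$ of the hull.

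\emph{Necessity of \eqref{eq:chordempty} for emptiness.} Suppose some $n$ violates \eqref{eq:chordempty}, and put $t=(h(n)/h(n_*))^r\in(0,1)$. If $\delta_s(n)\le0$, then $\neg\RH$ holds, $G$ attains its maximum on $D$ (as in the proof of Theorem~\ref{thm:exact}), and this maximizer is a contact with $\lambda=0$. Otherwise take $\lambda=\delta_s(n)/h(n)^r>0$. Then $g_\lambda(n)=0$, and the violation is exactly $g_\lambda(n_*)\ge0$. If $\inf_D g_\lambda=0$, then $n$ itself is a contact. If the infimum is negative, it is attained, because $g_\lambda\to0$ along the tail. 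It is attained away from $n_*$, since $g_\lambda(n_*)\ge0$. Either way $H_{r,s}\neq\varnothing$.

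\emph{Sufficiency of \eqref{eq:chordempty} for emptiness.} Suppose $n$ is a contact with parameter $\lambda$. Comparison with $n_*$ gives $g_\lambda(n)\le g_\lambda(n_*)$. Letting $m\to\infty$ in $D$ gives $g_\lambda(n)\le0$. Take the convex combination with weights $t$ and $1-t$. The $\lambda$-terms combine to $\lambda h(n)^r$ on both sides, and we obtain $\delta_s(n)\le t\,\delta_s(n_*)$. This contradicts \eqref{eq:chordempty} at $n$, so the equivalence holds.

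\emph{The RH assertion.} It remains to verify \eqref{eq:chordempty} under RH for $r\ge1$ and $s>0$. Since $h(n)<h(n_*)$ and $r\ge1$, we have $(h(n)/h(n_*))^r\le h(n)/h(n_*)$. So it suffices to show
\[
 \frac{\delta_s(n)}{h(n)}>\frac{\delta_s(n_*)}{h(n_*)} .
\]
Write $\delta_s=\Rgap\,h\cdot Q(G)$ with $Q(x)=(E^s-x^s)/(E-x)$. Then the required inequality becomes
\[
 \frac{\Rgap(n)}{R_*}>\frac{Q(G_*)}{Q(G(n))}.
\]
Nicolas' bound \eqref{eq:Nicolas} makes the left side at least $0.3863\ldots/0.2356\ldots>1.63$. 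The main obstacle is controlling the right side uniformly in $s$. The factor $Q$ is increasing in $x$ when $s>1$ and decreasing when $s<1$, so the comparison depends on where $G(n)$ lies relative to $G_*$.

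My first attempt at this step would be to establish $G(n)\ge G_*$ on $D$ under RH; the cases $s\ge1$ and $s<1$ are then treated separately. For $s\ge1$, $G(n)\ge G_*$ gives $Q(G(n))\ge Q(G_*)$, so the right side is at most $1$ and we are done. For $s<1$ one must instead bound $Q(G_*)/Q(G(n))$ above by $1.63$. The mean-value form $Q(x)=s\xi^{s-1}$ with $\xi\in(x,E)$ shows this ratio is at most $(E/G_*)^{1-s}<E/G_*$, which is close to $1$ since $G_*$ is close to $E$.

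If $G(n)\ge G_*$ cannot be shown cheaply, I would instead use $\delta_s(n)/\delta_s(n_*)\ge\min\bigl(a/b,(a/b)^{\,\cdot}\bigr)$-type bounds for $x\mapsto1-(1-x)^s$, where $a=1-G(n)/E$ and $b=1-G_*/E$. These would be combined with the $1.63$ margin and a direct check of the initial CA range.

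\emph{The final sentence.} The final statement is the case $r=s=k$ with $k\ge1$. Under RH every level is empty. Under $\neg\RH$ the family is a power scheme \eqref{eq:powers} with $w=h$, which is strictly decreasing to $0$. Theorem~\ref{thm:exact} then gives eventual stabilization at $\mathcal M$.
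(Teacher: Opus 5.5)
Your proof of the equivalence is correct and is essentially the paper's chord argument. The one difference is how you handle a violating $n$. You use the slope $\delta_s(n)/h(n)^r$ through $n$ and the limiting point, which forces your separate case $\delta_s(n)\le0$. The paper uses the reference slope $\lambda_*=\delta_s(n_*)/h(n_*)^r>0$ and needs no such case. Your deduction of the last sentence from Theorem~\ref{thm:exact} is also fine.

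The gap is in the RH assertion for $s>1$. Your main plan rests on proving $G(n)\ge G_*$ on $D$, and this is false. Already $G(720720)\approx1.7331$ and $G(1441440)\approx1.7277$, while $G_*\approx1.7512$. Your route cannot absorb these points through the uniform margin either. As $s\to\infty$, $Q(G_*)/Q(G(n))\to(E-G(n))/(E-G_*)$. At $n=1441440$ this limit is about $1.79$, which exceeds the ratio $\Rgap(110880)/R_*\approx1.639$ supplied by Nicolas's bound. So the right side cannot be bounded uniformly in $s$ by that margin. Your fallback is not specified enough to count as an argument.

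The missing observation is that this case needs no estimate. If $G(n)\le G_*$, then $\delta_s(n)\ge\delta_s(n_*)>t\,\delta_s(n_*)$ because $t<1$. The paper splits exactly here. For $G(n)>G_*$ your two arguments then finish the proof:
\begin{itemize}
\item for $s\ge1$, convexity of $x\mapsto x^s$ gives $Q(G(n))\ge Q(G_*)$;
\item for $s<1$, your mean-value bound $Q(G_*)/Q(G(n))\le(E/G_*)^{1-s}$ holds for every $G(n)<E$. It plays the role of the paper's estimates $a_s(z)\ge s$ and $a_s(z_*)\le s/z_*$.
\end{itemize}
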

\begin{proof}
The reference and the limiting point $(0,-E^s)$ determine the
chord in \eqref{eq:chordempty}; points strictly above it cannot
have lower supports. Conversely, put
$\lambda_* =\delta_s(n_*)/h(n_*)^r>0$. If the inequality fails,
$\delta_s-\lambda_*h^r$ is nonpositive at a nonreference point,
is zero at the reference, and tends to zero at infinity. A
negative value forces an attained negative minimum. If no value
is negative, the nonreference zero is itself a minimum. Either
case gives a nonreference contact.

Under RH, if $G(n)\le G_*$, then
$\delta_s(n)\ge\delta_s(n_*)$, so \eqref{eq:chordempty} holds.
Otherwise put $z=G(n)/E>z_*=G_*/E$ and
$a_s(z)=(1-z^s)/(1-z)$. For $s\ge1$, $a_s$ is increasing,
so $a_s(z)/a_s(z_*)\ge1$. For $0<s<1$, the integral formula
for the secant slope gives $a_s(z)\ge s$ and
$a_s(z_*)\le s z_*^{s-1}\le s/z_*$. Hence in both cases
\[
 \frac{\delta_s(n)}{\delta_s(n_*)}
 \ge\frac{G_*}{E}\frac{E-G(n)}{E-G_*}
 \ge\frac{G_*}{E}\frac{\Rgap(110880)}{R_*}
           \frac{h(n)}{h(n_*)}
 >\frac{h(n)}{h(n_*)}.
\]
The last numerical factor exceeds $1.61$. Since $0<h(n)/h(n_*)<1$
and $r\ge1$, this proves \eqref{eq:chordempty}.
The last assertion follows from Theorem~\ref{thm:exact}.
\end{proof}

\begin{corollary}[A reference-normalized inequality]\label{cor:newineq}
Under RH, for every integer $n>55440$,
\begin{equation}\label{eq:newineq}
 \sigma(n)+R_*\frac n{\sqrt{\log n}}<En\log\log n.
\end{equation}
The assertion on the CA tail alone is equivalent to RH, and
also to $H_{1,1}=\varnothing$.
\end{corollary}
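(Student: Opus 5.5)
Under RH the plan is to read \eqref{eq:newineq} directly off the Nicolas bound \eqref{eq:Nicolas}. Then we check the equivalence on the CA tail through Proposition~\ref{prop:chordempty} with $r=s=1$.

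\textbf{The inequality under RH.} By the definition \eqref{eq:Rgap}, $\Rgap(n)=\sqrt{\log n}\,[Eu(n)-\rho(n)]$. Multiplying by $n/\sqrt{\log n}$ shows that \eqref{eq:newineq} is exactly $\Rgap(n)>R_*$. For $n>55440$ we have $\Rgap(n)\ge\Rgap(110880)=0.3863\ldots>0.2356\ldots=R_*$, and this holds for all integers (not only CA numbers) in the normalization of Nicolas' Corollary~1.2. So the first assertion follows at once. If Nicolas' statement is only available on CA numbers, I would extend it to all integers with the usual CA-interval argument: on each interval between consecutive CA numbers, $\rho(n)\le$ the upper envelope value. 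The function $x\mapsto Eu-R_*/\sqrt{x}$ with $x=\log n$ is concave in $\log x$ coordinates, and I expect this step to be the main technical point. Following the paper's parenthetical remark, I would instead invoke the all-integers form directly.

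\textbf{Equivalence on the CA tail.} Let (I) denote ``\eqref{eq:newineq} holds for all $n\in D$, $n>n_*$''. Since $R_*>0$, (I) implies Robin's inequality $G(n)<E$ on $D\setminus\{n_*\}$. Together with $G_*<E$, this gives RH by \eqref{eq:RobinD}. The converse is the previous paragraph. For $H_{1,1}$, take $r=s=1$ in Proposition~\ref{prop:chordempty}. Then $\delta_1(n)=E-G(n)=h(n)\Rgap(n)$, and \eqref{eq:chordempty} reads
\[
 h(n)\Rgap(n)>h(n_*)R_*\frac{h(n)}{h(n_*)}=h(n)R_*,
\]
that is, $\Rgap(n)>R_*$, which is (I) at $n$. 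Hence $H_{1,1}=\varnothing$ is equivalent to (I), and the chain of equivalences closes. The only care needed is noting $h>0$, so that dividing by it is legitimate.
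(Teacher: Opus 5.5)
Your proposal is correct and follows the paper's proof. The inequality is $\Rgap(n)>R_*$, which follows from \eqref{eq:Nicolas} and $\Rgap(110880)>R_*$; on the CA tail it implies Robin's inequality and hence RH; and for $r=s=1$ the chord test \eqref{eq:chordempty}, after division by $h(n)>0$, is exactly $\Rgap(n)>R_*$. Your fallback CA-interval extension is unnecessary, since the paper, like you, applies \eqref{eq:Nicolas} to all integers $n>55440$.
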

\begin{proof}
The strict bound follows from \eqref{eq:Nicolas} and
$\Rgap(110880)>R_*$. On the CA tail it implies Robin's
inequality, hence RH. The chord test for $r=s=1$ is precisely
$\Rgap(n)>R_*$.
\end{proof}

This consequence of Nicolas's bound is weaker than his estimate,
but its constant is determined by the reference point in our
construction. The resulting chain terminates, whereas the chains
with slowly decreasing weights in Section~\ref{sec:maxima} have
infinite levels under RH.

\subsection{Comparison of the families and remaining questions}
Table~\ref{tab:families} compares the families considered above.
All definitions are unconditional, and all displayed families
preserve Robin's criterion at every level.

\begin{table}[htbp]
\centering\small
\caption{Intersection and size of the principal families.}\label{tab:families}
\begin{tabular}{>{\raggedright\arraybackslash}p{.22\linewidth}
>{\raggedright\arraybackslash}p{.18\linewidth}
>{\raggedright\arraybackslash}p{.23\linewidth}
>{\raggedright\arraybackslash}p{.23\linewidth}}
\toprule
Family&Intersection&Under RH&Under $\neg\RH$\\
\midrule
$A_k(a,\beta)$, $Q_k(a,\eta)$
 &$\varnothing$ always
 &Infinite levels; $m_k\to\infty$
 &Infinite levels; $m_k\to\infty$\\
\addlinespace
$P,L,S$; the two fractional-linear examples
 &$\mathcal M$
 &Infinite levels; $m_k\to\infty$
 &Finite levels; eventually $\mathcal M$\\
\addlinespace
$\Cl_D(-h^k,-G^k)$
 &$\mathcal M$
 &Every level empty
 &Finite nonempty levels; eventually $\mathcal M$\\
\bottomrule
\end{tabular}
\end{table}

In the preprint \emph{Subsets of colossally abundant numbers}
(2019, \href{https://arxiv.org/abs/1903.03490}{arXiv:1903.03490}),
X.~Wu separates CA numbers into three subclasses according
to the positions of $\log n$, $P(n)$, and the next prime
$p^+(n)$. His middle class $\CA_2$ is characterized by
$P(n)<\log n<p^+(n)$, and Robin's inequality can be reduced to
this class. The proof compares each omitted CA number with another CA number
having a larger Robin quotient. This resembles the transfer argument,
but we do not know whether $\CA_2$ is a level of one of our coordinate
schemes or whether Wu's construction can be iterated to give a
nested family.

Record conditions for $G$ also occur for extremely abundant
numbers \cite{NY}. Our reference is $55440$, whereas that
definition uses $10080$; a supporting-line condition is also
stronger than being a record. The record property alone therefore does not identify these sets.

For the arithmetic families, is it enough to check Robin's inequality
at the first contacts? For example, one may ask whether
\begin{equation}\label{eq:open-first}
 [G(m_k(A(1,1)))<E\text{ for every }k]\quad\Longrightarrow\quad\RH.
\end{equation}
The reverse implication is immediate, but
Theorem~\ref{thm:general-family} does not prove the displayed implication:
its counterexamples need not be first contacts. For maximum-preserving families, can one construct a larger CA number
with a greater Robin quotient for every supported record, or prove
in another way that $m_k$ tends to infinity? By the intersection
theorem, this would prove RH. Other questions are independent of RH:
error terms in the first-contact growth laws, lengths of plateaus
in $m_k$, and the number of prime factors in quotients of consecutive
contacts within a fixed level.

\medskip
\noindent\textbf{AI use statement.}
Large language models (LLMs) were used to assist with proof refinement,
literature review, manuscript revision, extensive computations, and the
development of examples.

\bigskip
\noindent\textsc{Oleg R. Musin}\\
School of Mathematical and Statistical Sciences\\
The University of Texas Rio Grande Valley\\
One West University Boulevard, Brownsville, TX 78520, USA\\
\textit{E-mail:} \href{mailto:oleg.musin@utrgv.edu}{oleg.musin@utrgv.edu}
\end{document}